\newtheorem*{definition}{Definition}
\newtheorem{lemma}{Lemma}
\newtheorem*{corollary}{Corollary}
\newtheorem{theorem}{Theorem}
\newcommand{\vertvert}{\begin{pspicture}[shift=-.4](-.6,-.5)(.6,.5)
\pscircle[linecolor=black,linestyle=dashed](0,0){.5}
\psbezier(.5;45)(.25;45)(.25;315)(.5;315)
\psbezier(.5;225)(.25;225)(.25;135)(.5;135)
\end{pspicture}}
\newcommand{\horizhoriz}{\begin{pspicture}[shift=-.4](-.6,-.5)(.6,.5)
\pscircle[linecolor=black,linestyle=dashed](0,0){.5}
\psbezier(.5;45)(.25;45)(.25;135)(.5;135)
\psbezier(.5;225)(.25;225)(.25;315)(.5;315)
\end{pspicture}}
\newcommand{\rcrossing}{\begin{pspicture}[shift=-.4](-.6,-.5)(.6,.5)
\qline(.5;135)(.5;315) \psline[border=.1](.5;45)(.5;225)
\end{pspicture}}
\newcommand{\vv}{\begin{pspicture}[shift=-.4](-.6,-.5)(.6,.5)
\psbezier(.5;45)(.25;45)(.25;315)(.5;315)
\psbezier(.5;225)(.25;225)(.25;135)(.5;135)
\end{pspicture}}
\newcommand{\hh}{\begin{pspicture}[shift=-.4](-.6,-.5)(.6,.5)
\psbezier(.5;45)(.25;45)(.25;135)(.5;135)
\psbezier(.5;225)(.25;225)(.25;315)(.5;315)
\end{pspicture}}
\newcommand{\btwohvert}{\begin{pspicture}[shift=-.6](-.6,-.7)(.6,.7)
\qline(0, .25)(.35, .6)\qline(0, .25)(-.35, .6)
\qline(0,-.25)(.35,-.6)\qline(0,-.25)(-.35,-.6)
\psline[doubleline=true](0,-.25)(0,.25)
\end{pspicture}}
\newcommand{\btwohhoriz}{\begin{pspicture}[shift=-.4](-.8,-.5)(.8,.5)
\qline( .25,0)( .6,.35)\qline( .25,0)( .6,-.35)
\qline(-.25,0)(-.6,.35)\qline(-.25,0)(-.6,-.35)
\psline[doubleline=true](-.25,0)(.25,0)
\end{pspicture}}
\newcommand{\gtwohvert}{\begin{pspicture}[shift=-.6](-.6,-.7)(.6,.7)
\qline(0, .25)(.433, .5)\qline(0, .25)(-.433, .5)
\qline(0,-.25)(.433,-.5)\qline(0,-.25)(-.433,-.5)
\qline(0,-.25)(0,.25)
\end{pspicture}}
\newcommand{\gtwohhoriz}{\begin{pspicture}[shift=-.4](-.8,-.5)(.8,.5)
\qline( .25,0)( .5,.433)\qline( .25,0)( .5,-.433)
\qline(-.25,0)(-.5,.433)\qline(-.25,0)(-.5,-.433)
\qline(-.25,0)(.25,0)
\end{pspicture}}
\newcommand{\doublehvert}{\begin{pspicture}[shift=-.6](-.6,-.7)(.6,.7)
\psline[doubleline=true](0, .25)(.35, .6)\qline(0, .25)(-.35, .6)
\psline[doubleline=true](0,-.25)(-.35,-.6)\qline(0,-.25)(.35,-.6)
\qline(0,-.25)(0,.25)
\end{pspicture}}
\newcommand{\doublehhoriz}{\begin{pspicture}[shift=-.4](-.8,-.5)(.8,.5)
\psline[doubleline=true]( .25,0)( .6,.35)\qline( .25,0)( .6,-.35)
\psline[doubleline=true](-.25,0)(-.6,-.35)\qline(-.25,0)(-.6,.35)
\qline(-.25,0)(.25,0)
\end{pspicture}}
\newcommand{\singleloop}{\begin{pspicture}[shift=-.4](-.6,-.5)(.6,.5)
\pscircle(0,0){.4}
\end{pspicture}}
\newcommand{\doubleloop}{\begin{pspicture}[shift=-.4](-.6,-.5)(.6,.5)
\pscircle[doubleline=true](0,0){.4}
\end{pspicture}}
\newcommand{\doublesquare}{\begin{pspicture}[shift=-.8](-.9,-.9)(.9,.9)
\psline[doubleline=true](.4;45)(.8;45)
\psline[doubleline=true](.4;135)(.8;135)
\psline[doubleline=true](.4;225)(.8;225)
\psline[doubleline=true](.4;315)(.8;315)
\psline(.4;45)(.4;135)\psline(.4;135)(.4;225)
\psline(.4;225)(.4;315)\psline(.4;315)(.4;45)
\end{pspicture}}
\newcommand{\semidoublesquare}{\begin{pspicture}[shift=-.8](-.9,-.9)(.9,.9)
\psline[doubleline=true](.4;45)(.8;45) \psline(.4;135)(.8;135)
\psline[doubleline=true](.4;225)(.8;225) \psline(.4;315)(.8;315)
\psline(.4;45)(.4;135)\psline(.4;135)(.4;225)
\psline(.4;225)(.4;315)\psline(.4;315)(.4;45)
\end{pspicture}}
\newcommand{\pentanode}{\pnode(.4;90){a1}\pnode(.4;162){a2}\pnode(.4;234){a3}
\pnode(.4;306){a4}\pnode(.4;18){a5}
\pnode(.9;90){b1}\pnode(.9;162){b2}\pnode(.9;234){b3}
\pnode(.9;306){b4}\pnode(.9;18){b5}}
\newcommand{\btwox}{\begin{pspicture}[shift=-.4](-.6,-.5)(.6,.5)
\qline(.5;45)(.5;225)\qline(.5;135)(.5;315)
\end{pspicture}}
\newcommand{\middlearrow}{\lput{:U}{\begin{pspicture}[shift=0](0,0)(0,0)
\psline[arrows=->,arrowscale=1.5](2.2pt,0)(2.3pt,0)\end{pspicture}}}
\newcommand{\mda}{\lput{:U}{\begin{pspicture}[shift=0](0,0)(0,0)
\psline[linecolor=black,arrows=->,arrowscale=1.7](3.2pt,0)(3.4pt,0)\end{pspicture}}}
\newcommand{\mdb}{\lput{:U}{\begin{pspicture}[shift=0](0,0)(0,0)
\psline[linecolor=lightgray,arrows=->,arrowscale=1.7](3.2pt,0)(3.4pt,0)\end{pspicture}}}
\newcommand{\mdc}{\lput{:U}{\begin{pspicture}[shift=0](0,0)(0,0)
\psline[linecolor=darkgray,arrows=->,arrowscale=1.7](3.2pt,0)(3.4pt,0)\end{pspicture}}}
\newcommand{\littley}{
\qline(.2887;30)(0,0)\qline(.2887;150)(0,0)\qline(.2887;270)(0,0)}
\newcommand{\littlelam}{
\qline(.2887;90)(0,0)\qline(.2887;210)(0,0)\qline(.2887;330)(0,0)}
\newcommand{\Hom}{\mathrm{Hom}}
\title{Admissibility and the $C_2$ Spider }
\author[W.Bloomquist]{Wade Bloomquist}
\author[A.Mejia]{Andres Mejia}
\begin{document}

\begin{abstract}
A tensor category is multiplicity-free if for any objects $A,B,C$ we have that $\mathrm{Hom}(A\otimes B\otimes C,\mathbb{C})$ is either $0$ or $1$ dimensional.  It is known that $Rep^{uni}(U_q(\mathfrak{sp}(4)))$ is not multiplicty-free. We find a full subcategory of $Rep^{uni}(U_q(\mathfrak{sp}(4)))$ which is multiplicty-free. A description of the dimension of these $\mathrm{Hom}$ spaces is given for this subcategory, including when $q$ is a root of unity.  The methods used arise from the description, given by Kuperberg, of $Rep^{uni}(U_q(\mathfrak{sp}(4)))$ as a spider.  The main tool is the recursive definition of clasps given by Kim.  In particular, we provide an appropriate notion of admissibility when looking at the $\mathrm{Sp}(4)_k$ ribbon graph invariants with restricted edge labels.
\end{abstract}

\maketitle

\let\thefootnote\relax\footnotetext{The authors were supported by NSF grant DMS-1358884.}

\section{Introduction}
Quantum topology has forged far reaching connections between low dimensional topology and algebra.  As an example, Reshetikhin and Turaev have shown applications of the representation theory of quantum groups towards link invariants, $3$-manifold invariants, and mapping class group representations, through the construction of TQFTs \cite{RT1, RT2}.   

These constructions were reformulated in terms of a skein theoretic approach by Blanchet, Habegger, Masbaum, and Vogel \cite{BHMV}.  In many ways this story follows the rediscovery of Temperley-Lieb algebras, and thus the Jones polynomial, by Jones being formulated in a diagrammatic language by Kauffman \cite{J1,J2,K}.  This diagrammatic interpretation allowed for the development of recoupling theory as described by Kauffman and Lins \cite{KL}.  Recoupling theory uses diagrammatic techniques to perform the computations of the above mentioned link invariants, $3$-manifold invariants, and mapping class group representations through combinatorial means.

The combinatorial spiders of Kuperberg serve, in some sense, as a generalization of the Temperley-Lieb algebra to the Lie algebras of rank $2$ \cite{GK}.  We will only be focusing on the $B_2/C_2$ spider which serves the role of the Temperley-Lieb algebra for $U_q(\mathfrak{sp}(4))$.  We look to develop some necessary results for the recoupling theory associated to this diagrammatic formulation.  In particular we  will utilize the construction of clasps by Kim to recursively make computations \cite{DK}.  

The organization of this paper is as follows.  First a review of the $C_2$ spider is given.  Then it is shown that when restricting only to irreducible representations of highest weight $(p,0)$, that this subcategory is multiplicity-free.  Finally a recursive computation allows for a condition to be found for the behavior of the dimension of these $\mathrm{Hom}$ spaces when $q$ is a root of unity.

\section{{Preliminaries}}
\subsection{What is a Spider?}A spider can be formed out of any pivotal tensor category and a collection of objects.  Rather than give the original definition of Kuperberg, we look to provide a modern formulation.  In particular a spider is a full subcategory whose objects are tensor products of the chosen set of objects and their duals.  In many ways a spider can be thought of as a planar algebra with labeled strands.  In particular if the label set is a single symmetrically self-dual object, meaning it is self dual and it's associated frobenius-schur indicator is $1$, then the associated spider is an unoriented unshaded planar algebra.  This formulation captures the original ideas of a spider being a pivotal tensor category. We will be looking only at the $B_2/C_2$ spider.  By this we mean the spider generated by the fundamental representations  of $Rep^{uni}(U_q(\mathfrak{sp}(4)))$, meaning the representation category of the quantum group $U_q(\mathfrak{sp}(4))$ given unimodal pivotal structure.
\subsection{The Combinatorial $C_2$ Spider}
Kuperberg was able to provide explicit combinatorial constructions for the spiders associated to rank $2$ Lie algebras.  In particular this gives a concrete description for the $\mathrm{Hom}$ spaces of these categories.  Namely, these $\mathrm{Hom}$ spaces are the free vector spaces having a basis of diagrams built out of certain generators subject to local relations.  This is analogous to Temperley-Lieb diagrams, non-crossing planar matchings, forming the basis of $\mathrm{Hom}$ spaces in $Rep^{uni}(U_q(\mathfrak{sl}(2,\mathbb{C})))$.  

We now turn our attention to the combinatorial $C_2$ spider.   As the $C_2$ spider has two generating objects, the two fundamental representations, we have two different strand types in our diagrams.  First $n$ points labeled $1$ and $m$ points labeled $2$ are on the boundary of the unit disk $D^2$. The we obtain basis vectors, called webs, by diagrams generated in the disk by a single element
\begin{align*}
\begin{pspicture}(-.7,-.7)(.7,.7)
\psline(-.5,0)(0,0)\psline(0,-.5)(0,0)
\psline[doubleline=true](0,0)(.35,.35)
\end{pspicture}
\end{align*}

subject to the following relations:
\begin{eqnarray}
\begin{split}
\singleloop   =&-\frac{[2][3]}{[6]} \\
\doubleloop  = & \frac{[6][5]}{[3][2]}  \\
\begin{pspicture}(-.6,-.2)(.6,.5)
\psline[doubleline=true](-.5,0)(0,0)
\psbezier(0,0)(.7,.7)(.7,-.7)(0,0)
\end{pspicture}
= & 0 \\
\end{split}
\begin{split}
\begin{pspicture}(-.5,-.2)(.7,.2)
\psline[doubleline=true](-.7,0)(-.3,0)
\pcarc[arcangle=45](-.3,0)(.3,0)
\pcarc[arcangle=-45](-.3,0)(.3,0)
\psline[doubleline=true](.3,0)(.7,0)
\end{pspicture}
 = & -([2])^2 \begin{pspicture}(-.9,-.2)(.6,.5)
\psline[doubleline=true](-.5,0)(.5,0)
\end{pspicture} \\
\begin{pspicture}(-.9,-.1)(.9,.5)
\psline[doubleline=true](.3;90)(.7;90)
\psline[doubleline=true](.3;210)(.7;210)
\psline[doubleline=true](.3;330)(.7;330)
\pcarc[arcangle=-15](.3;90)(.3;210)
\pcarc[arcangle=-15](.3;210)(.3;330)
\pcarc[arcangle=-15](.3;330)(.3;90)
\end{pspicture}   =  &  0\\
\vspace{2 mm}
\btwohvert  - & \btwohhoriz = \hh - \vv \label{fb2elliptic}
\end{split}
\end{eqnarray}

We will say that single strands are of type $1$, meaning they meet a $1$ at the boundary, and double strands are of type $2$, meaning they meet a $2$ at the boundary, using the notation $D_{n,m}$ to represent the disk with $n$ points on the boundary that attach to single strands, and $m$ points that attach to double strands.  We will then omit the label at the boundary as the strand type will make it clear. Here is an example of $D_{4,0}$:
\begin{align*}
\begin{pspicture}(-1,-1)(1,1)
\pscircle(0,0){1}
\psbezier(1;300)(.5;300)(.5;60)(1;60)
\psbezier(1;120)(.5;120)(.5;240)(1;240)
\end{pspicture}
\hspace{1cm}
\begin{pspicture}(-1,-1)(1,1)
\pscircle(0,0){1}
\psbezier(1;60)(.5;60)(.5;120)(1;120)
\psbezier(1;240)(.5;240)(.5;300)(1;300)
\end{pspicture}
\hspace{1cm}
{\begin{pspicture}(-1,-1)(1,1)
\pscircle(0,0){1}
\qline(0, .25)(.5, .866)\qline(0, .25)(-.5, .866)
\qline(0,-.25)(.5,-.866)\qline(0,-.25)(-.5,-.866)
\psline[doubleline=true](0,-.25)(0,.25)
\end{pspicture}}
\end{align*}

Additionally, by introducing the following notation we will be able to describe our diagram without any internal double edges:
$$
\btwox = \btwohhoriz - \vv
$$

\subsection{Clasps}

\begin{definition} A \textit{cut path} is a path whose endpoints separate the web space into two disjoint parts.  A cut path is said to be \textit{minimal} if it crosses as few strands as possible. If a cut path crosses $n$ single strands, $k$ double strands, and $k^{\prime}$ tetravalent vertices, it has weight $n\lambda_1+(k+k^{\prime})\lambda_2.$
\end{definition}

We further subject weights to  the following partial ordering:
\begin{align*} a \lambda_1+b \lambda_2 &\succ (a-2)\lambda_1+(b+1)\lambda_2\\
a \lambda_1+b \lambda_2 &\succ(a+2)\lambda_1+(b-2)\lambda_2
\end{align*}

\begin{definition}A clasp of weight $n \lambda_1+k\lambda_2$ , denoted $P_{n,k}$, 
is an idempotent  consisting of $n$ type $1$ strands and $k$ type $2$ strands that annihilate a web space if there exists a cut path of weight less than its own.
\end{definition}

Kuperberg showed that clasps are unique, while Kim provided a recursive construction for clasps of type $P_{m,0}$ or $P_{0,m}$ for the $C_2$ Spider (shown in figures $1$ and $2$.) We will say that clasps have the \textit{cut path property}.

\begin{figure}
\caption{The $P_{n,0}$ Clasp expansion}
\begin{align*}
\begin{pspicture}[shift=-.7](-.2,-.3)(1.2,1.3) \rput[t](.5,-.1){$n$}
\qline(.5,0)(.5,.4) \qline(.5,.6)(.5,1) \rput[b](.5,1.1){$n$}
\psframe[linecolor=black](0,.4)(1,.6)
\end{pspicture}
= \begin{pspicture}[shift=-.7](-.2,-.3)(1.4,1.3) \rput[t](.5,-0.1){$n-1$}
\qline(.5,0)(.5,.4) \qline(.5,.6)(.5,1) \rput[b](.5,1.1){$n-1$}
\qline(1.3,0)(1.3,1) \psframe[linecolor=black](0,.4)(1,.6)
\end{pspicture}
+ \frac{[2n][n+1][n-1]}{[2n+2][n][n]}
\begin{pspicture}[shift=-1.2](-.2,-.3)(1.45,2.3) \rput[t](.5,-.1){$n-1$}
\qline(.5,0)(.5,.4) \psframe[linecolor=black](0,.4)(1,.6)
\qline(.25,.6)(.25,1.4) \rput[l](.35,1){$n-2$} \qline(.5,1.6)(.5,2)
\rput[b](.5,2.1){$n-1$} \psarc(1,.6){.2}{0}{180}
\qline(1.2,0)(1.2,.6) \psarc(1,1.4){.2}{180}{0}
\qline(1.2,1.4)(1.2,2) \psframe[linecolor=black](0,1.4)(1,1.6)
\end{pspicture}
+ \frac{[n-1]}{[n][2]} \begin{pspicture}[shift=-1.2](-.2,-.3)(1.45,2.3)
\rput[t](.5,-.1){$n-1$} \qline(.5,0)(.5,.4) \qline(.25,.6)(.25,1.4)
\qline(.5,1.6)(.5,2) \qline(1.2,1.4)(1.2,2) \rput[b](.5,2.1){$n-1$}
\qline(.8,.6)(1.2,1.4) \qline(1.2,0)(1.2,.6) \qline(.8,1.4)(1.2,.6)
\psframe[linecolor=black](0,.4)(1,.6)
\psframe[linecolor=black](0,1.4)(1,1.6)
\end{pspicture}
\end{align*}
\end{figure}
\begin{figure}
\caption{The $P_{0,n}$ clasp expansion}
\begin{align*}
\begin{pspicture}[shift=-.7](-.2,-.3)(1.2,1.3)
\rput[t](.5,-.1){$n$}\psline[doubleline=true](.5,0)(.5,.4)
\psline[doubleline=true](.5,.6)(.5,1)
\psframe[linecolor=black](0,.4)(1,.6) \rput[b](.5,1.1){$n$}
\end{pspicture}
= \begin{pspicture}[shift=-.7](-.2,-.3)(1.4,1.3)
\rput[t](.5,-0.1){$n-1$}\psline[doubleline=true](.5,0)(.5,.4)
\psline[doubleline=true](.5,.6)(.5,1)\rput[b](.5,1.1){$n-1$}
\psline[doubleline=true](1.3,0)(1.3,1)
\psframe[linecolor=black](0,.4)(1,.6)
\end{pspicture}
+ \frac{[2n-1][2n-2]}{[2n+1][2]} \begin{pspicture}[shift=-1.2](-.2,-.3)(1.45,2.3)
\rput[t](.5,-.1){$n-1$} \psline[doubleline=true](.5,0)(.5,.4)
\psline[doubleline=true](.25,.6)(.25,1.4)\rput[l](.35,1){$n-2$}
\psline[doubleline=true](.5,1.6)(.5,2) \rput[b](.5,2.1){$n-1$}
\psarc[doubleline=true](1,.6){.2}{0}{180}
\psline[doubleline=true](1.2,0)(1.2,.6)
\psarc[doubleline=true](1,1.4){.2}{180}{0}
\psline[doubleline=true](1.2,1.4)(1.2,2)
\psframe[linecolor=black](0,.4)(1,.6)
\psframe[linecolor=black](0,1.4)(1,1.6)
\end{pspicture}
+ \frac{[2n-2]}{[2n][2][2]}\begin{pspicture}[shift=-1.2](-.2,-.3)(1.45,2.3)
\rput[t](.5,-.1){$n-1$} \psline[doubleline=true](.5,0)(.5,.4)
\psline[doubleline=true](.25,.6)(.25,1.4)
\psline[doubleline=true](.5,1.6)(.5,2) \rput[b](.5,2.1){$n-1$}
\psline[doubleline=true,nodesep=1pt](.8,.6)(.97,.94)
\psline[doubleline=true,nodesep=1pt](1.04,1.06)(1.2,1.4)
\psline[doubleline=true](1.2,0)(1.2,.6)
\psline[doubleline=true,nodesep=1pt](.8,1.4)(.97,1.06)
\psline[doubleline=true,nodesep=1pt](1.03,.94)(1.2,.6)
\psline[doubleline=true](1.2,1.4)(1.2,2)
\qline(.942,.94)(.942,1.06)\qline(1.06,.94)(1.06,1.06)
\psframe[linecolor=black](0,.4)(1,.6)
\psframe[linecolor=black](0,1.4)(1,1.6)
\end{pspicture}
\end{align*}
\end{figure}

One can also form a \textit{clasped web space}, obtained by requiring that each boundary strand be attached to a clasp (on the boundary.) From this, it is clear that any cut paths of lower weight than the sum of weights of each clasp will annihilate the diagram. 

Of particular importance are the so-called fusion, or triple-clasped, spaces which are composed of diagrams with exactly three clasps attached to the boundary. We denote these fusion spaces by 
\[I((p_1,q_1),(p_2,q_2),(p_3,q_3)),\] 
where the three clasps attached to the boundary are $P_{(p_1,q_1)}, P_{(p_2,q_2)},$ and $P_{(p_3,q_3)}$.  The importance of these spaces is apparent as they are isomorphic to 
\[ \mathrm{Hom}(V_{(p_1,q_1)}\otimes V_{(p_2,q_2)}\otimes V_{(p_3,q_3)},\mathbb{C}),\]
using the equivalence proven by Kuperberg.

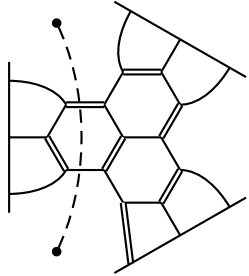
\begin{figure}
$$
\begin{pspicture}(-2,-2)(2,2)
\pnode(0,.866){a1}\pnode(.5,.866){a2}
\pnode(-.75,.433){a3}\pnode(-.25,.433){a4}\pnode(.75,.433){a5}
\pnode(-1,0){a6}\pnode(0,0){a7}\pnode(.5,0){a8}
\pnode(-.75,-.433){a9}\pnode(-.25,-.433){a10}\pnode( .75,-.433){a11}
\pnode(0,-.866){a12}\pnode(.5,-.866){a13}
\pnode(1.5; 60){b2}\pnode([nodesep=.75,angle=150]b2){b3}
\pnode([nodesep=.75,angle=330]b2){b1}
\pnode(1.5;180){b5}\pnode([nodesep=.75,angle=270]b5){b6}
\pnode([nodesep=.75,angle= 90]b5){b4}
\pnode(1.5;300){b8}\pnode([nodesep=.75,angle= 30]b8){b9}
\pnode([nodesep=.75,angle=210]b8){b7}
\psline[doubleline=true,nodesep=1pt](a1)(a2)
\ncline{a1}{a4}\ncline{a2}{a5}
\psline[doubleline=true,nodesep=1pt](a3)(a4)\ncline{a3}{a6}\ncline{a4}{a7}
\psline[doubleline=true, nodesep=1pt](a5)(a8)
\psline[doubleline=true, nodesep=1pt](a6)(a9)
\ncline{a7}{a8}
\psline[doubleline=true, nodesep=1pt](a7)(a10)\ncline{a8}{a11}
\ncline{a9}{a10}\ncline{a10}{a12}
\psline[doubleline=true, nodesep=1pt](a11)(a13)\ncline{a12}{a13}
\ncline{a2}{b2}\nccurve[angleA=120,angleB=240]{a1}{b3}
\nccurve[angleA=0,angleB=240]{a5}{b1}
\ncline{a6}{b5}\nccurve[angleA=120,angleB=0]{a3}{b4}
\nccurve[angleA=240,angleB=0]{a9}{b6}
\ncline{a13}{b8}\psline[doubleline=true, nodesep=1pt](a12)(b7)
\nccurve[angleA=0,angleB=120]{a11}{b9}
\psline([nodesep=.25,angle=150]b3)([nodesep=.25,angle=330]b1)
\psline([nodesep=.25,angle=270]b6)([nodesep=.25,angle= 90]b4)
\psline([nodesep=.25,angle= 30]b9)([nodesep=.25,angle=210]b7)
\rput([nodesep=.2,angle= 60]b1){}
\rput([nodesep=.2,angle= 60]b2){}
\rput([nodesep=.2,angle= 60]b3){}
\rput([nodesep=.2,angle=180]b4){}
\rput([nodesep=.2,angle=180]b5){}
\rput([nodesep=.2,angle=180]b6){}
\rput([nodesep=.2,angle=300]b7){}
\rput([nodesep=.2,angle=300]b8){}
\rput([nodesep=.2,angle=300]b9){}
\pcarc[arrows=*-*,linestyle=dashed,arcangle=25]
    (1.75;120)(1.75;240)
\end{pspicture}
$$
\caption{A triple clasped null diagram}
\end{figure}

\subsection{Towards a Fusion Category}
Up until now the majority of our discussion has been independent of whether $q$ is generic or a root of unity.  In particular, the spiders described above have only been described as pivotal tensor categories.  The finitely many objects needed to be a fusion category provides a main difference from the general pivotal tensor categories described above. Thus some process will be required to drop to finitely many simple objects.  This is done by a semi-simplification through modding out by negligible morphisms.  In the language of spiders this corresponds to modding out the morphisms which have trace $0$.  It is seen that when $q$ is a root of unity of order $2(2k+6)$ the clasps of weight $(p,q)$ where $p+q>k$ are negligible.
The details of this process were first worked out by Turaev and Wenzl \cite{TW}.  They constructed semi-simple and modular categories using the $\mathfrak{sp}(4)$ link invariant.  Following this work, Blanchet and Beliakova extended these results and showed the connections to the modular categories coming from quantum groups \cite{BB}.  When $q$ is a root of unity of order $2(2k+6)$ we will call this fusion category $\mathrm{Sp}(4)_k$.   

\subsection{Theta Nets}

From triple-clasped spaces, we can associate a closed web, called a theta net.  We look at the constant, $\theta((p_1,q_1),(p_2,q_2),(p_3,q_3),v,w)$, evaluated by looking at composing 
\[v\in \mathrm{Hom}(\mathbb{C}, V_{(p_1,q_1)}\otimes V_{(p_2,q_2)}\otimes V_{(p_3,q_3)})\]
with 
\[w\in \mathrm{Hom}(V_{(p_1,q_1)}\otimes V_{(p_2,q_2)}\otimes V_{(p_3,q_3)},\mathbb{C}).\]
This corresponds to the diagram $v$ on the left connected to diagram $w$ on the right.Of particular interest is the theta net when $w=v$, since when this net is $0$, we have that this morphism is negligible, and thus when looking at $\mathrm{Sp}(4)_k$, that basis vector is lost in the semi-simplification process.

The main goal of this paper is to establish the value for 
\[\theta((a,0),(b,0),(c,0),v,v),\]
$$\begin{pspicture}[shift=-1.6](-.95,-1.7)(.95,1.7) \pscircle*(-.8,0){.1}
\pscircle*(.8,0){.1}
\pccurve[angleA=60,angleB=120,ncurv=1.2](-.8,0)(.8,0)
\pccurve[angleA=0,angleB=180,ncurv=1.2](-.8,0)(.8,0)
\pccurve[angleA=-60,angleB=-120,ncurv=1](-.8,0)(.8,0)
\rput(0,1){$(a,0)$}\rput(0,.25){$(b,0)$}\rput(0,-.89){$(c,0)$}
\end{pspicture}
$$
as this will allow us to describe $\mathrm{dim}(\mathrm{Hom}(\mathbb{C}, V_{(a,0)}\otimes V_{(b,0)}\otimes V_{(c,0)}))$ in $\mathrm{Sp}(4)_k$.  In fact for the rest of the paper we will only be examining clasps of type $(p,0)$ and so we will often omit $(p,0)$ and simply write $p$.
\section{Computing Theta Nets}

We now provide a quick verification that closed webs behave as we know they will in the combinatorial $C_2$ spider setting.

\begin{lemma} Any closed web $D_{p,q}$ resolves to a constant.
\end{lemma}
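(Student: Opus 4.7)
The plan is to argue by induction on the number of internal vertices of the closed web. A closed web is a planar graph $G$ embedded in $D^2$ (equivalently in $S^2$) with no boundary points, whose vertices are either the trivalent generating vertex joining a type-$2$ strand to two type-$1$ strands, or, after using the abbreviation, the tetravalent vertex $\btwox$.

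For the base case, if $G$ has no internal vertices then it is a disjoint union of type-$1$ loops and type-$2$ loops, each of which evaluates to the scalar $-\tfrac{[2][3]}{[6]}$ or $\tfrac{[6][5]}{[3][2]}$ by the first two relations in~(\ref{fb2elliptic}). The product of these scalars gives a constant.

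For the inductive step, I would invoke Euler's formula $V-E+F=2$ together with the valence constraints (trivalent or tetravalent) to show that any such $G$ must contain at least one \emph{small} face of a form covered by the defining relations: a monogon attached to a type-$2$ strand, a bigon with two type-$1$ arcs flanked by type-$2$ strands, an all-type-$2$ triangle, or a quadrilateral pattern falling under the elliptic relation. Each such configuration is exactly the left-hand side of one of the relations in~(\ref{fb2elliptic}), so substituting the right-hand side produces a linear combination of webs each having strictly fewer internal vertices. By induction every term is a constant, and hence so is $G$.

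The main obstacle is the combinatorial step: verifying that the valence-and-face bookkeeping really does force a face of one of the shapes appearing in the listed relations, accounting for both trivalent and tetravalent vertices and for both strand types. The standard way around this is to assign a weighted contribution to each face based on its length and the pattern of incident strand types, and then to show that the resulting weighted Euler sum must be negative somewhere, producing a face small enough to trigger a reduction. Once this combinatorial fact is in hand, the inductive reduction to a scalar is immediate.
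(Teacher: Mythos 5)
Your overall strategy (Euler formula plus local reductions) is the same as the paper's, but as written there is a genuine gap, and you have correctly located it yourself: the ``valence-and-face bookkeeping'' is not carried out, and with both trivalent and tetravalent vertices in play it is not routine. The paper removes this difficulty before it arises by first changing basis so that every internal double edge is absorbed into the tetravalent vertex (via the substitution defining the crossing-like generator); after that \emph{every} internal vertex has valence $4$, so $2E=4V$, and Euler's formula immediately yields $\sum_k(4-k)P_k=4$, forcing a face with at most three sides. No weighted discharging argument is needed. If you insist on keeping the trivalent vertices, you must actually produce the discharging argument you allude to, and you must also verify that each ``small face'' you find really is an instance of a listed relation --- note in particular that the monogon and all-type-$2$ triangle relations only apply to faces of a specific strand-type pattern.

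There is a second, independent problem with your induction measure. You induct on the number of internal vertices, but the last relation in~(\ref{fb2elliptic}) rewrites a configuration with two trivalent vertices as a sum in which one term again has two trivalent vertices (the same picture rotated by $90^\circ$) plus two vertex-free terms; so the vertex count does not strictly decrease across all terms, and your induction does not close. The paper instead inducts on the number of faces, which does strictly decrease when a monogon, digon, or triangle is resolved in the tetravalent basis. I would recommend adopting both of the paper's choices: change basis first, then induct on faces.
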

\begin{proof} We first change basis, so that we need only consider tetravalent vertices. We proceed by induction on the number of faces in $S$. First, it is clear that if $S$ contains only one face, it reduces to $-\frac{[6][2]}{[3]}$. We proceed with  an Euler characteristic argument. Now assume that we have $E$ faces. Let $P_k$ denote the number of faces with $k$ edges in $S$, $E$ the number of edges, and $V$ the number of vertices. We have that $2E=4V$, and summing over the faces we find that The following formulas are immediate: $ 2E=\sum_{k} k P_k$.  Since the Euler characteristic for a planar graph is $1$, we see that $4V-4E+4F=4$, implying by substitution that
\begin{align*}\sum_{k}(4-k)P_k=4.
\end{align*}
However, since for $k \geq 4$, the left hand side is negative, we know there must have existed a digon or triangle in $S$, and by our relations, these resolve down, reducing the number of faces implying that the inductive hypothesis applies.

\end{proof}
This tells us that closed webs, for example example theta nets, can be evaluated to constants solely using the combinatorial framework of the $C_2$ spider.

The following lemma tells us that for generic $q$ the triple clasped space 
\[I((a,0),(b,0),(c,0))\]
has the same admissibility conditions as the $A_1$ spider.  Namely, $a+b+c$ is even and all three triangle inequalities are satisfied. These arise from showing that any diagrams with a tetravalent vertex are annihilated, so the admissibility conditions have a simple combinatorial interpretation.

\begin{lemma} The dimension of a labeled triple-clasped space with $n$ single strands and $0$ double strands, is either $0$ or $1$
\end{lemma}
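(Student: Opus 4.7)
The plan is to show that any nonzero web in $I((a,0),(b,0),(c,0))$ must be, up to scalar, the unique non-crossing planar matching of the $a+b+c$ single boundary strands, so that the space is spanned by at most one web.

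\textit{Step 1: Eliminate internal double edges.} Using the basis change $\btwohhoriz = \btwox + \vv$ introduced earlier, I would rewrite every web in $I((a,0),(b,0),(c,0))$ as a linear combination of webs built solely from single strands meeting at tetravalent (crossing) vertices. Since all three boundary clasps consist only of type-$1$ endpoints, this rewriting is consistent with the boundary data, and each web becomes a planar configuration of single strands with possibly some tetravalent crossings.

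\textit{Step 2: Annihilate webs with tetravalent vertices.} This is the main content. I want to show that any web from Step 1 containing at least one tetravalent vertex is killed by one of $P_{a,0}, P_{b,0}, P_{c,0}$. The key ingredient is the relation $n\lambda_1 \succ (n-2)\lambda_1 + \lambda_2$ in the partial order, which says that a cut path crossing $n-2$ single strands plus one tetravalent vertex has strictly smaller weight than one crossing $n$ single strands. By pigeonhole, the four strands emanating from any tetravalent vertex $v$ must send at least two to a common clasp, say $P_{a,0}$. Choosing $v$ innermost with respect to that clasp so that the planar region bounded by these two strands and the corresponding portion of the clasp arc contains no further vertex, I would construct a cut path that detours through $v$ rather than crossing the two strands directly. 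Such a cut path crosses exactly $a-2$ strands attached to $P_{a,0}$ together with the vertex $v$, yielding weight $(a-2)\lambda_1 + \lambda_2 \prec a\lambda_1$. By the cut path property, $P_{a,0}$ then annihilates the web.

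\textit{Step 3: Count the surviving matchings.} After Step 2, only non-crossing single-strand matchings of the $a+b+c$ boundary points survive. By planarity, any such matching is uniquely specified by a triple $(n_{12},n_{13},n_{23})$ of non-negative integers counting the strands between each pair of clasps, subject to $n_{12}+n_{13}=a$, $n_{12}+n_{23}=b$, $n_{13}+n_{23}=c$. This linear system has at most one non-negative integer solution, which exists precisely when $a+b+c$ is even and the three triangle inequalities hold. Hence $\dim I((a,0),(b,0),(c,0)) \in \{0,1\}$.

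The main obstacle is Step 2: carefully producing the innermost vertex-clasp configuration and verifying that the detoured cut path picks up exactly the claimed number of crossings, no more. Getting the planarity argument right — in particular, guaranteeing the existence of a vertex-free region bounded by two strands running from a single tetravalent vertex into a common clasp — is the key combinatorial point on which the whole proof rests.
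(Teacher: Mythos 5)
Your overall strategy---change basis to tetravalent vertices, kill every web containing such a vertex with a lower-weight cut path, and observe that at most one non-crossing matching is compatible with the three clasps---is the same as the paper's, but your Step 2 has a genuine gap. The pigeonhole premise ``the four strands emanating from any tetravalent vertex $v$ must send at least two to a common clasp'' is only valid when all four edges at $v$ actually terminate on clasps. After the basis change a web may contain several tetravalent vertices joined to one another by internal edges (a chain or tree of vertices, say), and then a given vertex may have two, three, or all four of its edges ending at other vertices, so no two of its edges need reach a common clasp and no detoured cut path of weight $(a-2)\lambda_1+\lambda_2$ is available at that vertex. This multi-vertex case is exactly what the paper's induction on the number of faces is designed to handle: one cuts just above a vertex in lowest position, and either the cut strictly reduces the face count (so the inductive hypothesis applies to the resulting diagram) or one is left with three free edges and only two clasps to attach them to, at which point the pigeonhole argument does apply. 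Without some such reduction your Step 2 only disposes of webs with a single tetravalent vertex.

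A secondary point: even when two edges of $v$ do reach the clasp $P_{a,0}$, to get a cut path crossing exactly $a-2$ single strands plus the vertex you need those two edges to be adjacent among the $a$ endpoints of that clasp; your ``innermost'' choice guarantees the enclosed region contains no further vertex, but it could still contain arcs returning to $P_{a,0}$, and you should note that such turnbacks are themselves annihilated (or argue that the region is genuinely empty) before asserting the claimed crossing count. Step 3 is fine, though the linear system $n_{12}+n_{13}=a$, $n_{12}+n_{23}=b$, $n_{13}+n_{23}=c$ already presupposes that no surviving matching has a strand with both ends on the same clasp, which again relies on the turnback observation.
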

\begin{proof} Again, we change basis to tetravalent vertices to eliminate internal type $2$ strands.  Suppose we have the labeling $(a,b,c)$, and let $\mathbf{a,b,c}$ denote the three clasps respectively. We will induct on the number of faces in our diagram. Assuming that there are no faces, we see that this condition is equivalent to assuming that there is at most one tetravalent vertex. If there are no tetravalent vertices, we are done since this establishes the claim. If there is a tetravalent vertex, we have the following picture:
\begin{align*}
\begin{pspicture}(-2,-1.2)(2,.5)
\psline(-1,-1)(0,-1)
\psline(1,-1)(2,-1)
\psline(0,0)(1,0)
\rput(.5,.2){$\bf{a}$}
\rput(-.5,-1.2){$\bf{b}$}
\rput(1.5,-1.2){$\bf{c}$}
\pccurve[angleA=60,angleB=180,ncurv=1](-.5,-1)(0,-.5)
\psline[linecolor=red](0,-.5)(.5,-.5)
\pccurve[angleA=-60,angleB=90,ncurv=1](0,-.5)(1.5,-1)
\pccurve[angleA=60,angleB=270,ncurv=1](0,-.5)(.5,0)
\end{pspicture}
\end{align*}
so we have a cut path by a pigeonhole argument. Hence, we can assume that all three edges attach to $\mathbf{c}$. This also creates a cut path, so we are done.

\vspace{2 mm}

Now, assume that the claim holds for all labellings with $k-1$ faces. First, we choose a tetravalent vertex in lowest position (our diagram should at least be isotopic to one with a vertex in lowest position). We can assume since the diagram is not null that no two strands attach to either $b$ or $c$  Directly above the vertex, we introduce a "cut," and get the following picture:

\begin{align*}
\begin{pspicture}(-2,-1.2)(2,.5)
\psline(-1,-1)(0,-1)
\psline(1,-1)(2,-1)
\psline(0,1)(1,1)
\rput(.5,1.2){$\bf{a}$}
\rput(-.5,-1.2){$\bf{b}$}
\rput(1.5,-1.2){$\bf{c}$}
\pccurve[angleA=60,angleB=180,ncurv=1, linecolor=red](-.5,-1)(0,-.5)
\pccurve[angleA=0,angleB=270,ncurv=1, linecolor=red](0,-.5)(1,-.1)
\pccurve[angleA=-60,angleB=90,ncurv=1,linecolor=red](0,-.5)(1.5,-1)
\pccurve[angleA=60,angleB=270,ncurv=1,linecolor=red](0,-.5)(.5,0)
\pccurve[angleA=120,angleB=120,ncurv=1,linestyle=dashed](-.2,-1)(2,-1)
\end{pspicture}
\end{align*}
We consider the resulting diagram, where there are two cases: if the vertex bounded a face, the cut reduces the number of faces, and the inductive hypothesis applies. Otherwise, there are three edges, and two remaining clasps that the edges must connect to, implying that there is a cut path by the previous pigeonhole argument.
\end{proof}
This result tells us that $\mathrm{Hom}(V_a\otimes V_b\otimes V_c,\mathbb{C})$ is either $0$ or $1$ dimensional and so we will use the notation
\[\theta(a,b,c,v,v):=\theta(a,b,c).\]
In the language of ribbon graph invariants, this allows us to leave vertices uncolored when restricting edges to the labels above.

We will denote the RHS in Lemma $3$ by $Net(m,n,p)$.
\begin{lemma}
$$\begin{pspicture}[shift=-1.6](-.95,-1.7)(.95,1.7) \pscircle*(-.8,0){.1}
\pscircle*(.8,0){.1}
\pccurve[angleA=60,angleB=120,ncurv=1](-.8,0)(.8,0)
\pccurve[angleA=0,angleB=180,ncurv=1](-.8,0)(.8,0)
\pccurve[angleA=-60,angleB=-120,ncurv=1](-.8,0)(.8,0)
\rput(0,.9){$a$}\rput(0,.25){$b$}\rput(0,-.89){$c$}
\end{pspicture}\hspace{1 mm}
=
\hspace{1 mm}
\begin{pspicture}[shift=-.9](-.5,-1)(4,1)
\psframe(-.5,-.2)(.5,.2)
\psframe(1,-.2)(2,.2)
\psframe(2.5,-.2)(3.5,.2)
\pccurve[angleA=60,angleB=120,ncurv=1](.1,.2)(1.4,.2)
\pccurve[angleA=-60,angleB=-120,ncurv=1](.1,-.2)(1.4,-.2)
\pccurve[angleA=60,angleB=120,ncurv=1](1.6,.2)(2.9,.2)
\pccurve[angleA=-60,angleB=-120,ncurv=1](1.6,-.2)(2.9,-.2)
\pccurve[angleA=60,angleB=120,ncurv=1](-.3,.2)(3.2,.2)
\pccurve[angleA=-60,angleB=-120,ncurv=1](-.3,-.2)(3.2,-.2)
\rput(.75,.8){$m$} \rput(2.25,.8){$n$} \rput(1.5,-1.7){$p$}
\end{pspicture}
$$
where \begin{align*}
m=\frac{a+b-c}{2}&& n=\frac{b+c-a}{2} && p=\frac{a+c-b}{2}
\end{align*}
\end{lemma}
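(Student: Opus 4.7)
The plan is to exhibit an explicit generator of the triple-clasped Hom space $I((a,0),(b,0),(c,0))$ and then compute the theta net as the resulting double composition. The previous lemma, together with the admissibility discussion preceding it, shows that this Hom space is one-dimensional precisely when $a+b+c$ is even and the triangle inequalities hold -- equivalently, when $m$, $n$, and $p$ are non-negative integers. Outside this admissible regime both sides of the claimed identity vanish, so I may assume admissibility throughout.

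I construct $v \in I((a,0),(b,0),(c,0))$ by placing the clasps $P_{a,0}$, $P_{b,0}$, $P_{c,0}$ on the boundary of a disk and, in the interior, connecting the emanating type-$1$ strands by three planar bundles of $m$, $n$, and $p$ parallel strands joining the pairs $(P_{a,0},P_{b,0})$, $(P_{b,0},P_{c,0})$, and $(P_{a,0},P_{c,0})$ respectively. The strand counts balance since $m+p=a$, $m+n=b$, and $n+p=c$. To see $v \ne 0$ I invoke the cut-path property: each cut path worth checking isolates one clasp from the other two and crosses precisely the two bundles incident to that clasp, so its weight equals the isolated clasp's own weight and is never strictly less. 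Combined with the one-dimensionality from the previous lemma, $v$ must be a basis vector.

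Next, $\theta(a,b,c) = \theta(a,b,c,v,v)$ is the closed web produced by gluing two copies of $v$ along their three boundary clasps. Under this gluing each matched pair of boundary clasps becomes a directly-stacked composition $P_{x,0}\circ P_{x,0}$, which collapses to a single $P_{x,0}$ by idempotency. The bundles contributed by the two copies of $v$ then flank the shared clasp line from its two sides, yielding three clasps $P_{a,0},P_{b,0},P_{c,0}$ with $m$-, $n$-, and $p$-bundles joining the appropriate pairs and the $p$-bundle wrapping around the middle clasp $P_{b,0}$ -- precisely the configuration drawn on the right-hand side.

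The main technical point is verifying that this planar layout is consistent: the bundles from the two copies of $v$ must be isotopable into the depicted arrangement without introducing crossings, and the clasp-adjacency must be preserved under gluing. This is handled by choosing a compatible cyclic order of bundle endpoints on each clasp's boundary so that adjacent bundles remain adjacent when the two copies are glued, which is possible thanks to the rotational symmetry of the clasp's endpoint labeling.
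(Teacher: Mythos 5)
Your argument is correct and is essentially the paper's own proof written out in full: Lemma 2 identifies the unique basis web of $I((a,0),(b,0),(c,0))$ as the planar triple of strand bundles of sizes $m,n,p$, and gluing two copies along the clasps plus idempotency (the paper's ``$90^\circ$ rotation'') yields the $Net(m,n,p)$ diagram. The only superfluous step is your cut-path argument that $v\neq 0$, which is neither needed for the diagrammatic identity nor by itself a proof of non-vanishing, but it does not affect the correctness of the proof.
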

\begin{proof}
This is an immediate consequence of Lemma $2$ and a $90^\circ$ rotation.
\end{proof}
This lemma implies
\[\theta(a,b,c)=Net(m,n,p).\]

\begin{lemma} 
\begin{align*}\mathrm{Tr}(P_{p,0})=\left(\frac{[2p+4]}{[4]}\right)\left(\frac{[3+p][p+1]}{[3]}\right).
\end{align*}
\end{lemma}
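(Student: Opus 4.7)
The plan is to argue by induction on $p$ using the clasp recursion of Figure 1. The base case $p=0$ is immediate: $P_{0,0}$ is the identity on the empty tensor product, so its trace is $1$, matching $\frac{[4][3][1]}{[4][3]}$.

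For the inductive step, apply the Figure 1 recursion to $P_{p,0}$ and take the categorical trace of both sides. Three terms must be evaluated. The first, $P_{p-1,0}\otimes\mathrm{id}$, traces to the single-strand loop value times $\mathrm{Tr}(P_{p-1,0})$. For the second term, under trace closure the external strand on the right combines with the internal cap and cup into a configuration where the two stacked $P_{p-1,0}$ clasps compose directly; idempotency $P_{p-1,0}\circ P_{p-1,0}=P_{p-1,0}$ collapses them into a single clasp, yielding a scalar multiple of $\mathrm{Tr}(P_{p-1,0})$. For the third term, first expand the tetravalent vertex using the $C_2$ elliptic relation; most of the resulting smoothings place a cup or cap adjacent to a clasp, creating a cut path of weight strictly less than $(p-1)\lambda_1$, which is annihilated by the cut-path property. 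The surviving pieces reduce, by the same idempotency argument, to scalar multiples of $\mathrm{Tr}(P_{p-1,0})$ (and possibly of $\mathrm{Tr}(P_{p-2,0})$).

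Combining these three contributions with the coefficients from Figure 1 and applying the inductive hypothesis produces an explicit expression for $\mathrm{Tr}(P_{p,0})$ in terms of earlier values. A final simplification using standard $q$-number identities confirms the closed form $\frac{[2p+4][p+3][p+1]}{[4][3]}$.

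The principal obstacle is the third term: resolving the tetravalent vertex and cleanly identifying which of the resulting sub-diagrams survive the clasp annihilation requires a careful cut-path analysis, alongside tracking the quantum coefficients that emerge from each smoothing. A secondary obstacle is the final $q$-number manipulation, which is routine but requires some dexterity with bracket identities to coax the three-term combination into the compact product form.
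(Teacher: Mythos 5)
Your proposal follows essentially the same route as the paper: induct using Kim's double clasp expansion, take the trace of all three terms, evaluate the first by the loop value, the second by idempotency of $P_{p-1,0}$, and the third by resolving the tetravalent vertex and killing the extra smoothing via the cut-path property. The only small imprecision is your hedge that a $\mathrm{Tr}(P_{p-2,0})$ contribution might appear --- it does not; all three terms are scalar multiples of $\mathrm{Tr}(P_{p-1,0})$, so the recursion closes in a single step exactly as you otherwise describe.
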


\begin{proof}
We proceed by induction, using the recursive definition given by Kim \cite{DK}.

The base case is clear, since the trace of $P_1$ is nothing but a loop that evaluates to $\frac{-[6][2]}{[3]}$ which agrees with the formula above. This can be calculated  as follows, using Kim's double clasp expansion and taking the trace:
$$\begin{pspicture}[shift=-.7](-.2,-.3)(1.2,1.3) \rput[t](.5,-.1){$n$}
\qline(.5,0)(.5,.4) \qline(.5,.6)(.5,1) \rput[b](.5,1.1){$n$}
\psframe[linecolor=black](0,.4)(1,.6)
\pccurve[angleA=180,angleB=180,ncurv=1.5](.5,1)(.5,0)
\end{pspicture}
= \begin{pspicture}[shift=-.7](-.2,-.3)(1.7,1.3) \rput[t](.5,-0.1){$n-1$}
\qline(.5,0)(.5,.4) \qline(.5,.6)(.5,1)
\pccurve[angleA=180,angleB=180,ncurv=1.5](.5,1)(.5,0)
\rput[b](.5,1.1){$n-1$}
\qline(1.3,0)(1.3,1) \psframe[linecolor=black](0,.4)(1,.6)
\pnode(1.3,0){a1}
\pnode(1.3,1){a2}
\pccurve(a1)(a2)
\end{pspicture}
+ \frac{[2n][n+1][n-1]}{[2n+2][n][n]}
\begin{pspicture}[shift=-1.2](-.2,-.3)(1.9,2.3)
\rput[t](.5,-.1){$n-1$}
\qline(.5,0)(.5,.4) \psframe[linecolor=black](0,.4)(1,.6)
\qline(.25,.6)(.25,1.4) \rput[l](.35,1){$n-2$} \qline(.5,1.6)(.5,2)
\rput[b](.5,2.1){$n-1$} \psarc(1,.6){.2}{0}{180}
\qline(1.2,0)(1.2,.6) 
\psarc(1,1.4){.2}{180}{0}
\qline(1.2,1.4)(1.2,2)
\psframe[linecolor=black](0,1.4)(1,1.6)
\pnode(1.2,0){a1}
\pnode(1.2,2){a2}
\pccurve(a1)(a2)
\pccurve[angleA=180,angleB=180,ncurv=.9](.5,2)(.5,0)
\end{pspicture}
+ \frac{[n-1]}{[n][2]} \begin{pspicture}[shift=-1.2](-.2,-.3)(1.45,2.3)
\rput[t](.5,-.1){$n-1$} \qline(.5,0)(.5,.4) \qline(.25,.6)(.25,1.4)
\qline(.5,1.6)(.5,2)  \rput[b](.5,2.1){$n-1$}
\qline(.8,.6)(1.2,1.4)  \qline(.8,1.4)(1.2,.6)
\psframe[linecolor=black](0,.4)(1,.6)
\psframe[linecolor=black](0,1.4)(1,1.6)
\pnode(1.2,1.4){a1}
\pnode(1.2,.6){a2}
\pccurve(a1)(a2)
\pccurve[angleA=180,angleB=180,ncurv=.9](.5,2)(.5,0)
\end{pspicture}
$$
where taking the trace amounts to the trace of the $P_{n-1,0}$ along with some factors: we resolve the first summand by multiplying by the loop constant $-\frac{[6][2]}{3}$; we resolve the second by using idempotence, so it is merely $P_{n-1,0}$; we resolve the third by multiplying $\frac{[6][2]}{3}$ (changing basis again, we see that one summand dies, and the second subtracts off a loop constant.)

From this, we obtain that
\begin{align*}P_n&=\mathrm{Tr}(P_{n-1}) \left(\frac{-[6][2]}{[3]}+\frac{[2n][n+1][n-1]}{[2n+2][n]}+\frac{[n-1][6][2]}{[n][2][3]}\right)\\
&=\left(\frac{[4+n][n]}{[3]}\cdot \frac{[2n+2]}{[3]}\right) \cdot \left(\frac{-[6][2]}{[3]}+\frac{[2n][n+1][n-1]}{[2n+2][n]^2}+\frac{[n-1][6][2]}{[n][2][3]}\right)\\
&=\frac{[2n+4]}{[4]}\cdot\frac{[3+n][n+1]}{[3]},
\end{align*}
as desired.
\end{proof}
\begin{theorem} $Net(m,n,0)=Tr(P_{m+n})$
\end{theorem}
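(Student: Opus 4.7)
The plan is to reinterpret $Net(m,n,0)$ as the categorical trace of a composition of clasps and then invoke absorption of the smaller clasps by $P_{m+n}$.

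Unpacking the picture from Lemma 3 with $p=0$, the closed web $Net(m,n,0)$ places $P_{m+n}$ centrally with its $m+n$ strands on each side split into an $m$-bundle routed through $P_m$ and an $n$-bundle routed through $P_n$, all closed up into loops. Viewing $P_{m+n}$ as an idempotent on $V_1^{\otimes(m+n)}$ and $P_m\otimes P_n$ as the idempotent acting as $P_m$ on the first $m$ strands and $P_n$ on the last $n$, a planar isotopy identifies this closed web with
\[
Net(m,n,0)\;=\;\mathrm{Tr}\bigl(P_{m+n}\circ(P_m\otimes P_n)\bigr).
\]

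The heart of the argument is the absorption identity $P_{m+n}\circ(P_m\otimes P_n)=P_{m+n}$. Iterating Kim's recursion (Figure 1), write $P_m=\mathrm{id}_m+R_m$, where every summand of $R_m$ is a web on $m$ strands containing a cut path of weight strictly less than $m\lambda_1$ in the partial order; the cup-cap terms contribute cut paths crossing fewer than $m$ single strands, while the tetravalent terms give cut paths of weight $(m-2)\lambda_1+\lambda_2\prec m\lambda_1$. After tensoring with $\mathrm{id}_n$, the cut path in each summand of $R_m\otimes\mathrm{id}_n$ extends horizontally across the $n$ idle strands to a cut path of the whole web of weight strictly less than $(m+n)\lambda_1$, which the cut-path property annihilates upon composition with $P_{m+n}$. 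Thus $P_{m+n}(P_m\otimes\mathrm{id}_n)=P_{m+n}$, and the symmetric argument expanding $P_n$ gives $P_{m+n}(\mathrm{id}_m\otimes P_n)=P_{m+n}$; combining the two yields the absorption identity.

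Taking the trace completes the proof:
\[
Net(m,n,0)=\mathrm{Tr}\bigl(P_{m+n}(P_m\otimes P_n)\bigr)=\mathrm{Tr}(P_{m+n}).
\]
The main obstacle is the planarity-sensitive step of verifying that a cut path internal to a summand of $R_m$ really does extend to a cut path of the larger tensored web with endpoints on the outer boundary and still separating the diagram into two pieces; this is routine given the explicit form of Kim's recursion but requires careful bookkeeping about how the internal cut paths interact with the identity strands tacked on when forming $R_m\otimes\mathrm{id}_n$.
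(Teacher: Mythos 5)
Your proof is correct and follows essentially the same route as the paper's: the paper's argument is a wordless sequence of diagrams that isotopes the theta net into the closure of $P_{m+n}\circ(P_m\otimes P_n)$ and then absorbs the two smaller clasps into $P_{m+n}$ before closing up the trace. You simply make explicit, via Kim's recursion and the cut-path property, the absorption identity that the paper's pictures take for granted.
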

\begin{proof}
\begin{align*}
\begin{pspicture}[shift=-.9](-.5,-1)(3.5,1)
\psframe(-.5,-.2)(.5,.2)
\psframe(1,-.2)(2,.2)
\psframe(2.5,-.2)(3.5,.2)
\pccurve[angleA=60,angleB=120,ncurv=1](.1,.2)(1.4,.2)
\pccurve[angleA=-60,angleB=-120,ncurv=1](.1,-.2)(1.4,-.2)
\pccurve[angleA=60,angleB=120,ncurv=1](1.6,.2)(2.9,.2)
\pccurve[angleA=-60,angleB=-120,ncurv=1](1.6,-.2)(2.9,-.2)
\rput(.75,.8){$m$} \rput(2.25,.8){$n$} 
\end{pspicture}
=
\begin{pspicture}[shift=-1.5](0,-2)(4,1)
\psframe(.3,-.9)(1.3,-.5)
\psframe(.5,-.2)(2.5,.2)
\psframe(1.7,-.9)(2.7,-.5)
\psline(.8,-.2)(.8,-.5)
\qline(.8,.2)(.8,.4)
\psarc(.5,.4){.3}{0}{180}
\qline(.2,.4)(.2,-1.1)
\psarc(.5,-1.1){.3}{180}{0}
\qline(.8,-1.1)(.8,-.9)
\psline(2.2,-.2)(2.2,-.5)
\qline(2.2,.2)(2.2,.4)
\qline(2.2,-.9)(2.2,-1.1)
\psarc(2.5,.4){.3}{0}{180}
\psarc(2.5,-1.1){.3}{180}{0}
\qline(2.8,-1.1)(2.8,.4)
\rput(.75,.8){$m$} \rput(2.25,.8){$n$} 
\end{pspicture}
=
\begin{pspicture}[shift=-1.7](0,-2)(3,1)
\psframe(.5,-.2)(2.5,.2)
\psline(.8,-.2)(.8,-.4)
\qline(.8,.2)(.8,.4)
\psarc(.5,.4){.3}{0}{180}
\qline(.2,.4)(.2,-.4)
\psarc(.5,-.4){.3}{180}{0}
\psline(2.2,-.2)(2.2,-.4)
\qline(2.2,.2)(2.2,.4)
\psarc(2.5,.4){.3}{0}{180}
\psarc(2.5,-.4){.3}{180}{0}
\qline(2.8,-.4)(2.8,.4)
\rput(.75,.8){$m$} \rput(2.25,.8){$n$} 
\end{pspicture}
=\begin{pspicture}[shift=-1.7](0,-2)(3,1)
\psframe(.5,-.2)(2.5,.2)
\psline(2.2,-.2)(2.2,-.4)
\qline(2.2,.2)(2.2,.4)
\psarc(2.5,.4){.3}{0}{180}
\psarc(2.5,-.4){.3}{180}{0}
\qline(2.8,-.4)(2.8,.4)
\rput(2.25,.8){$m+n$}
\end{pspicture}
\end{align*}
which is exactly the trace.
\end{proof}

We will abbreviate the expansion coefficients for $P_n$ by defining
\begin{align*}
\alpha_n:=\frac{[2n][n+1][n-1]}{[2n+2][n]^2} && \beta_n:=\frac{[n-1]}{[n][2]},
\end{align*}
and for further convenience, we will define
\begin{align*}
A_i:=-\frac{[6][2]}{[3]}+\alpha_{m+i}+\alpha_{n+i}+\frac{[6][2]}{[3]}(\beta_{n+i}+\beta_{m+i})-[4][2]\beta_{n+i}\cdot \beta_{m+i}&& B_i:=\alpha_{n+i} \cdot \alpha_{m+i}
\end{align*}
definitions that will be made clear by the next few lemmas. The first step of our recursion is easy:

\begin{lemma} $Net(m,n,1)=A_1\cdot Net(m,n,0)$
\end{lemma}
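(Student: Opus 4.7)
The plan is to apply Kim's clasp expansion (Figure 1) to each of the two outer clasps $P_{m+1}$ and $P_{n+1}$ in the web of $Net(m,n,1)$, where in both cases we orient the expansion so that the single outer strand plays the role of the ``last'' strand of the clasp. Each expansion produces three summands (identity, $\alpha$-U-turn, $\beta$-vertex), so that $Net(m,n,1)$ becomes a sum of $9$ webs indexed by the choices made in each of the two clasps. The plan is then to evaluate each of these $9$ webs as a scalar multiple of $Net(m,n,0)$, possibly zero.

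The identity-identity term reduces the two outer clasps to $P_m$ and $P_n$ and leaves the outer strand as an isolated single-strand loop of value $-\tfrac{[6][2]}{[3]}$, with the remaining web being exactly $Net(m,n,0)$. Each of the two pure-$\alpha$ terms ($\alpha_{m+1}$-identity and identity-$\alpha_{n+1}$) uses the U-turn to fold the outer strand onto an internal strand; idempotence of the adjacent clasp then collapses the stacked clasp-pair in the U-turn diagram to produce a clean copy of $Net(m,n,0)$ with coefficient $\alpha_{m+1}$ or $\alpha_{n+1}$ respectively. Each of the two pure-$\beta$ terms uses the change-of-basis identity $\btwox = \btwohhoriz - \vv$ to rewrite the tetravalent vertex, after which a digon reduction eliminates the internal structure and leaves $Net(m,n,0)$ alongside a disjoint single-strand loop of value $-\tfrac{[6][2]}{[3]}$, giving the $\tfrac{[6][2]}{[3]}(\beta_{m+1}+\beta_{n+1})$ contribution.

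For the $\beta_{m+1}$-$\beta_{n+1}$ term, the two tetravalent vertices meet along the outer strand; applying the elliptic relation $\btwohvert - \btwohhoriz = \hh - \vv$ and then a digon reduction yields the $-[4][2]\beta_{m+1}\beta_{n+1}$ coefficient in front of $Net(m,n,0)$. The three remaining cross terms ($\alpha_{m+1}\alpha_{n+1}$, $\alpha_{m+1}\beta_{n+1}$, $\beta_{m+1}\alpha_{n+1}$) each produce webs in which the combined U-turn or vertex structure reroutes the outer strand so as to create a cut path across the middle clasp $P_{m+n}$ of weight strictly less than $(m+n)\lambda_1$. By the cut-path property, $P_{m+n}$ annihilates these configurations, so each contributes $0$. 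Summing the five nonzero contributions gives precisely $A_1 \cdot Net(m,n,0)$.

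The main obstacle I expect is the careful topological bookkeeping in the $\beta_{m+1}$-$\beta_{n+1}$ reduction, where the elliptic relation produces four summands and tracking which survive after the local digon and triangle rules requires a case-by-case analysis. A secondary subtlety is verifying that the three vanishing cross terms truly produce sub-threshold cut paths, which requires tracing how the U-turns and tetravalent vertices reroute the outer strand through the clasp regions before invoking the cut-path property of $P_{m+n}$.
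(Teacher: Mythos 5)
Your proposal follows the paper's proof essentially verbatim: the paper likewise applies Kim's single-clasp recursion to both outer clasps $P_{m+1}$ and $P_{n+1}$, collects the nine resulting terms by symmetry, kills the $\alpha\alpha$ and mixed $\alpha\beta$ terms via cut paths of weight below $(m+n)\lambda_1$ through the middle clasp, and evaluates the five surviving terms to assemble exactly the coefficient $A_1$. The only cosmetic difference is in the $\beta_{m+1}\beta_{n+1}$ term, where the paper invokes a pre-packaged relation for two adjacent tetravalent vertices (its ``double cross trick'') rather than re-deriving it from the elliptic and digon relations, but both routes produce the same $-[4][2]\beta_{m+1}\beta_{n+1}$ contribution.
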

\begin{proof} Using the double clasp expansion we obtain the equation

\begin{align*}
\begin{pspicture}[shift=-.9](-.5,-1)(3.5,1)
\psframe(-.5,-.2)(.5,.2)
\psframe(1,-.2)(2,.2)
\psframe(2.5,-.2)(3.5,.2)
\pccurve[angleA=60,angleB=120,ncurv=1](.1,.2)(1.4,.2)
\pccurve[angleA=-60,angleB=-120,ncurv=1](.1,-.2)(1.4,-.2)
\pccurve[angleA=60,angleB=120,ncurv=1](1.6,.2)(2.9,.2)
\pccurve[angleA=-60,angleB=-120,ncurv=1](1.6,-.2)(2.9,-.2)
\pccurve[angleA=60,angleB=120,ncurv=1](-.3,.2)(3.2,.2)
\pccurve[angleA=-60,angleB=-120,ncurv=1](-.3,-.2)(3.2,-.2)
\rput(.75,.8){$m$} \rput(2.25,.8){$n$} \rput(1.5,-1.7){$1$}
\end{pspicture}
&=
\begin{pspicture}[shift=-.9](-1,-1)(3.7,1.2)
\psframe(-.5,-.2)(.5,.2)
\psframe(1,-.2)(2,.2)
\psframe(2.5,-.2)(3.5,.2)
\pccurve[angleA=60,angleB=120,ncurv=1](.1,.2)(1.4,.2)
\pccurve[angleA=-60,angleB=-120,ncurv=1](.1,-.2)(1.4,-.2)
\pccurve[angleA=60,angleB=120,ncurv=1](1.6,.2)(2.9,.2)
\pccurve[angleA=-60,angleB=-120,ncurv=1](1.6,-.2)(2.9,-.2)
\rput(.75,.8){$m$} \rput(2.25,.8){$n$} 
\pnode(1.5,0){a1}
\psellipse(1.5,0)(2.2,1.2)
\end{pspicture}
+(\alpha_{n+1}+\alpha_{m+1})
\begin{pspicture}[shift=-1.4](-1,-2)(4.5,2)
\psframe(-.5,-.2)(.5,.2)
\psframe(1,-.2)(2,.2)
\pccurve[angleA=60,angleB=120,ncurv=1](.1,.2)(1.4,.2)
\pccurve[angleA=-60,angleB=-120,ncurv=1](.1,-.2)(1.4,-.2)
\rput(.75,.8){$m$} \rput(2.25,.8){$n$} 
\qline(3,-1)(3,-.8) \psframe[linecolor=black](2.5,-.8)(3.5,-.4)
\qline(3,-.4)(3,.4)  \qline(3,.8)(3,1)
 \psarc(3.5,-.4){.2}{0}{180}
\qline(3.7,-1)(3.7,-.4) 
\psarc(3.5,.4){.2}{180}{0}
\qline(3.7,.4)(3.7,1)
\psframe[linecolor=black](2.5,.4)(3.5,.8)
\pnode(1.2,-1){a1}
\pnode(1.2,1){a2}
\qline(1.6,-.2)(1.6,-1)
\psarc(2.3,-1){.7}{180}{0}
\qline(1.6,.2)(1.6,1)
\psarc(2.3,1){.7}{0}{180}
\pccurve[angleA=90,angleB=90,ncurv=1](3.7,1)(-.7,0)
\pccurve[angleA=-90,angleB=-90,ncurv=1](-.7,0)(3.7,-1)
\end{pspicture}
\\[1em]
+(\beta_{n+1}+\beta_{m+1})&
\begin{pspicture}[shift=-1.4](-1,-2)(4.5,2.5)
\psframe(-.5,-.2)(.5,.2)
\psframe(1,-.2)(2,.2)
\pccurve[angleA=60,angleB=120,ncurv=1](.1,.2)(1.4,.2)
\pccurve[angleA=-60,angleB=-120,ncurv=1](.1,-.2)(1.4,-.2)
\rput(.75,.8){$m$} \rput(2.25,.8){$n$} 
\qline(3,-1)(3,-.8) \psframe[linecolor=black](2.5,-.8)(3.5,-.4)
\qline(3,-.4)(3,.4)  \qline(3,.8)(3,1)
\psline(3.3,-.4)(3.7,.4)
\qline(3.7,-1)(3.7,-.4) 
\psline(3.3,.4)(3.7,-.4)
\qline(3.7,.4)(3.7,1)
\psframe[linecolor=black](2.5,.4)(3.5,.8)
\pnode(1.2,-1){a1}
\pnode(1.2,1){a2}
\qline(1.6,-.2)(1.6,-1)
\psarc(2.3,-1){.7}{180}{0}
\qline(1.6,.2)(1.6,1)
\psarc(2.3,1){.7}{0}{180}
\pccurve[angleA=90,angleB=90,ncurv=1](3.7,1)(-.7,0)
\pccurve[angleA=-90,angleB=-90,ncurv=1](-.7,0)(3.7,-1)
\end{pspicture}
+\alpha_{n+1}\alpha_{m+1}
\begin{pspicture}[shift=-1.4](-1,-2)(4.5,2)
\psframe(1,-.2)(2,.2)
\rput(.75,.8){$m$} \rput(2.25,.8){$n$} 
\qline(3,-1)(3,-.8) \psframe[linecolor=black](2.5,-.8)(3.5,-.4)
\qline(3,-.4)(3,.4)  \qline(3,.8)(3,1)
 \psarc(3.5,-.4){.2}{0}{180}
\qline(3.7,-1)(3.7,-.4) 
\psarc(3.5,.4){.2}{180}{0}
\qline(3.7,.4)(3.7,1)
\psframe[linecolor=black](2.5,.4)(3.5,.8)
\qline(1.6,-.2)(1.6,-1)
\psarc(2.3,-1){.7}{180}{0}
\qline(1.6,.2)(1.6,1)
\psarc(2.3,1){.7}{0}{180}
\pccurve[angleA=90,angleB=90,ncurv=1](3.7,1)(-.7,1)
\pccurve[angleA=-90,angleB=-90,ncurv=1](-.7,-1)(3.7,-1)
\qline(0,-1)(0,-.8) \psframe[linecolor=black](-.5,-.8)(.5,-.4)
\qline(0,-.4)(0,.4)  \qline(0,.8)(0,1)
 \psarc(-.5,-.4){.2}{0}{180}
\qline(-.7,-1)(-.7,-.4) 
\psarc(-.5,.4){.2}{180}{0}
\qline(-.7,.4)(-.7,1)
\psframe[linecolor=black](-.5,.4)(.5,.8)
\qline(1.4,-.2)(1.4,-1)
\psarc(.7,-1){.7}{180}{0}
\qline(1.4,.2)(1.4,1)
\psarc(.7,1){.7}{0}{180}
\end{pspicture}
\\[3em]
+(\alpha_{m+1}\beta_{n+1}+\beta_{m+1}\alpha_{n+1})&
\begin{pspicture}[shift=-1.4](-1,-2)(4.5,2)
\psframe(1,-.2)(2,.2)
\rput(.75,.8){$m$} \rput(2.25,.8){$n$} 
\qline(3,-1)(3,-.8) \psframe[linecolor=black](2.5,-.8)(3.5,-.4)
\qline(3,-.4)(3,.4)  \qline(3,.8)(3,1)
\qline(3.7,-1)(3.7,-.4) 
\qline(3.7,.4)(3.3,-.4)
\qline(3.7,-.4)(3.3,.4)
\qline(3.7,.4)(3.7,1)
\psframe[linecolor=black](2.5,.4)(3.5,.8)
\qline(1.6,-.2)(1.6,-1)
\psarc(2.3,-1){.7}{180}{0}
\qline(1.6,.2)(1.6,1)
\psarc(2.3,1){.7}{0}{180}
\pccurve[angleA=90,angleB=90,ncurv=1](3.7,1)(-.7,1)
\pccurve[angleA=-90,angleB=-90,ncurv=1](-.7,-1)(3.7,-1)
\qline(0,-1)(0,-.8) \psframe[linecolor=black](-.5,-.8)(.5,-.4)
\qline(0,-.4)(0,.4)  \qline(0,.8)(0,1)
 \psarc(-.5,-.4){.2}{0}{180}
\qline(-.7,-1)(-.7,-.4) 
\psarc(-.5,.4){.2}{180}{0}
\qline(-.7,.4)(-.7,1)
\psframe[linecolor=black](-.5,.4)(.5,.8)
\qline(1.4,-.2)(1.4,-1)
\psarc(.7,-1){.7}{180}{0}
\qline(1.4,.2)(1.4,1)
\psarc(.7,1){.7}{0}{180}
\end{pspicture}
+(\beta_{n+1}\beta_{m+1})
\begin{pspicture}[shift=-1.4](-1,-2)(4.5,2)
\psframe(1,-.2)(2,.2)
\rput(.75,.8){$m$} \rput(2.25,.8){$n$} 
\qline(3,-1)(3,-.8) \psframe[linecolor=black](2.5,-.8)(3.5,-.4)
\qline(3,-.4)(3,.4)  \qline(3,.8)(3,1)
\qline(3.7,-1)(3.7,-.4) 
\qline(3.7,.4)(3.3,-.4)
\qline(3.7,-.4)(3.3,.4)
\qline(3.7,.4)(3.7,1)
\psframe[linecolor=black](2.5,.4)(3.5,.8)
\qline(1.6,-.2)(1.6,-1)
\psarc(2.3,-1){.7}{180}{0}
\qline(1.6,.2)(1.6,1)
\psarc(2.3,1){.7}{0}{180}
\pccurve[angleA=90,angleB=90,ncurv=1](3.7,1)(-.7,1)
\pccurve[angleA=-90,angleB=-90,ncurv=1](-.7,-1)(3.7,-1)
\qline(0,-1)(0,-.8) \psframe[linecolor=black](-.5,-.8)(.5,-.4)
\qline(0,-.4)(0,.4)  \qline(0,.8)(0,1)
\qline(-.7,-1)(-.7,-.4) 
\qline(-.7,.4)(-.3,-.4)
\qline(-.7,-.4)(-.3,.4)
\qline(-.7,.4)(-.7,1)
\psframe[linecolor=black](-.5,.4)(.5,.8)
\qline(1.4,-.2)(1.4,-1)
\psarc(.7,-1){.7}{180}{0}
\qline(1.4,.2)(1.4,1)
\psarc(.7,1){.7}{0}{180}
\end{pspicture}
\end{align*}

\vspace{5 mm}

where the diagrams with sums are collected by symmetry.
One can easily check that the diagrams with $\alpha_{n+1}\alpha_{m+1}$ and $\beta_{n+1}+\beta_{m+1}$ annihilate by the cut path property;  the  diagram with $\alpha_{n+1}+\alpha_{m+1}$ is just $Net(m,n,0)$; the diagram with $\beta_{n+1}+\beta_{m+1}$ is just $\frac{[6][2]}{[3]}Net(m,n,0)$, and the first diagram is just $-\frac{[6][2]}{[3]} Net(m,n,0)$. As for the last diagram with $\beta_{n+1}\beta_{m+1}$, we use the following important trick (which we will continue to use liberally without mention):

\begin{align*}
\psscalebox{.8}{
\begin{pspicture}[shift=-1.4](-1,-2)(4.5,2)
\psframe(1,-.2)(2,.2)
\rput(.75,.8){$m$} \rput(2.25,.8){$n$} 
\qline(3,-1)(3,-.8) \psframe[linecolor=black](2.5,-.8)(3.5,-.4)
\qline(3,-.4)(3,.4)  \qline(3,.8)(3,1)
\qline(3.7,-1)(3.7,-.4) 
\qline(3.7,.4)(3.3,-.4)
\qline(3.7,-.4)(3.3,.4)
\qline(3.7,.4)(3.7,1)
\psframe[linecolor=black](2.5,.4)(3.5,.8)
\qline(1.6,-.2)(1.6,-1)
\psarc(2.3,-1){.7}{180}{0}
\qline(1.6,.2)(1.6,1)
\psarc(2.3,1){.7}{0}{180}
\pccurve[angleA=90,angleB=90,ncurv=1](3.7,1)(-.7,1)
\pccurve[angleA=-90,angleB=-90,ncurv=1](-.7,-1)(3.7,-1)
\qline(0,-1)(0,-.8) \psframe[linecolor=black](-.5,-.8)(.5,-.4)
\qline(0,-.4)(0,.4)  \qline(0,.8)(0,1)
\qline(-.7,-1)(-.7,-.4) 
\qline(-.7,.4)(-.3,-.4)
\qline(-.7,-.4)(-.3,.4)
\qline(-.7,.4)(-.7,1)
\psframe[linecolor=black](-.5,.4)(.5,.8)
\qline(1.4,-.2)(1.4,-1)
\psarc(.7,-1){.7}{180}{0}
\qline(1.4,.2)(1.4,1)
\psarc(.7,1){.7}{0}{180}
\end{pspicture}
}
&=\psscalebox{.8}{\begin{pspicture}[shift=-2](-2,-3.5)(4.5,2)
\psframe(1,-.2)(2,.2)
\rput(.75,.8){$m$} \rput(2.25,.8){$n$} 
\qline(3,-1)(3,-.8) \psframe[linecolor=black](2.5,-.8)(3.5,-.4)
\qline(3,-.4)(3,.4)  \qline(3,.8)(3,1)
\qline(3.7,.4)(3.3,-.4)
\qline(3.7,-.4)(3.3,.4)
\psframe[linecolor=black](2.5,.4)(3.5,.8)
\qline(1.6,-.2)(1.6,-1)
\psarc(2.3,-1){.7}{180}{0}
\qline(1.6,.2)(1.6,1)
\psarc(2.3,1){.7}{0}{180}
\pccurve[angleA=320,angleB=0,ncurv=1](3.7,.4)(1.5,-3.5)
\pccurve[angleA=180,angleB=220,ncurv=1](1.5,-3.5)(-.7,.4)
\pccurve[angleA=-90,angleB=-90,ncurv=1](-.7,-1)(3.7,-1)
\qline(0,-1)(0,-.8) \psframe[linecolor=black](-.5,-.8)(.5,-.4)
\qline(0,-.4)(0,.4)  \qline(0,.8)(0,1)
\qline(-.7,.4)(-.3,-.4)
\qline(-.7,-.4)(-.3,.4)
\qline(-.7,-.4)(-.7,-1)
\qline(3.7,-.4)(3.7,-1)
\psframe[linecolor=black](-.5,.4)(.5,.8)
\qline(1.4,-.2)(1.4,-1)
\psarc(.7,-1){.7}{180}{0}
\qline(1.4,.2)(1.4,1)
\psarc(.7,1){.7}{0}{180}
\end{pspicture}
}
=-[2][4] \hspace{2mm}\begin{pspicture}[shift=-.9](-.5,-1)(3.5,1)
\psframe(-.5,-.2)(.5,.2)
\psframe(1,-.2)(2,.2)
\psframe(2.5,-.2)(3.5,.2)
\pccurve[angleA=60,angleB=120,ncurv=1](.1,.2)(1.4,.2)
\pccurve[angleA=-60,angleB=-120,ncurv=1](.1,-.2)(1.4,-.2)
\pccurve[angleA=60,angleB=120,ncurv=1](1.6,.2)(2.9,.2)
\pccurve[angleA=-60,angleB=-120,ncurv=1](1.6,-.2)(2.9,-.2)
\pccurve[angleA=60,angleB=120,ncurv=1](-.3,.2)(3.2,.2)
\pccurve[angleA=-60,angleB=-120,ncurv=1](-.3,-.2)(3.2,-.2)
\rput(.75,.8){$m$} \rput(2.25,.8){$n$} \rput(1.5,-1.7){$1$}
\end{pspicture}
\end{align*}
where the final equality follows from expanding the diagram with the relation
\begin{align*}
\begin{pspicture}[shift=-.6](-1.2,-.7)(1.2,.7) \pnode(1.2;30){a1}
\pnode(1.2;150){a2} \pnode(1.2;210){a3} \pnode(1.2;330){a4}
\pnode(.6;90){b1} \pnode(.3;90){b2} \pnode(.3;270){b3}
\nccurve[angleA=315,angleB=180,ncurv=1]{a2}{b3}
\nccurve[angleA=225,angleB=0,ncurv=1]{a1}{b3}
\nccurve[angleA=135,angleB=0,ncurv=1]{a4}{b2}
\nccurve[angleA=45,angleB=180,ncurv=1]{a3}{b2}
\end{pspicture}
 =  -[2]^2\begin{pspicture}[shift=-.6](-.8,-.7)(.8,.7) \pnode(.84;45){a1}
\pnode(.84;135){a2} \pnode(.84;225){a3} \pnode(.84;315){a4}
\ncline{a2}{a4} \ncline{a3}{a1} \end{pspicture} -[2][4]
\begin{pspicture}[shift=-.6](-.7,-.7)(.7,.7) \pnode(.84;45){a1}
\pnode(.84;135){a2} \pnode(.84;225){a3} \pnode(.84;315){a4}
\nccurve[angleA=135,angleB=225,ncurv=1]{a4}{a1}
\nccurve[angleA=45,angleB=-45,ncurv=1]{a3}{a2}
\end{pspicture}
\end{align*}
and noting that the first summand dies by the cut path property.

Putting all of this together, we see that
\begin{align*}Net(m,n,1)=(-\frac{[6]}{[2]}{[3]}+\alpha_{n+1}+\alpha_{m+1}+\frac{[6]}{[2]}{[3]}(\beta_{n+1}+\beta_{m+1})-[4][2]\beta_{n+1}\beta_{m+1})Net(m,n,0)=A_1 net(m,n,0),
\end{align*}
as desired.
\end{proof}

Our next goal is to  determine the value of $Net(m,n,p)$ inductively. Unfortunately, it is too hopeful that this can done directly and for this end we must define a slightly new type of net shape

\vspace{8 mm}

\begin{definition}
\begin{align*}
Net(m,n,p_e+1,p_i-1)\hspace{2mm}=\hspace{2mm}
\begin{pspicture}[shift=-3](-1,-2)(4.5,2)
\psframe(-1,-.2)(0,.2)
\psframe(1,-.2)(2,.2)
\psframe(3,-.2)(4,.2)
\psframe(.8,1)(1.2,2)
\psframe(1.8,1)(2.2,2)
 \psarc(2.2,2){.2}{270}{90}
 \psarc(.8,2){.2}{90}{270}
 \psline(1.2,1.8)(1.8,1.8)
 \psline(.8,2.2)(2.2,2.2)
 \pccurve[angleA=90,angleB=180,ncurv=1](-.2,.2)(.8,1.2)
 \pccurve[angleA=270,angleB=270,ncurv=1](-.2,-.2)(1.4,-.2)
 \pccurve[angleA=0,angleB=90,ncurv=1](2.2,1.2)(3.2,.2)
 \psarc(1.2,1){.2}{0}{90}
 \psline(1.4,1)(1.4,.2)
 \psarc(1.8,1){.2}{90}{180}
 \psline(1.6,1)(1.6,.2)
 \pccurve[angleA=270,angleB=270,ncurv=1](1.6,-.2)(3.2,-.2)
 \rput(1.5,2.4){$1$} \rput(.5,.3){$m$} \rput(2.5,.3){$n$}
\psline(1.2,1.4)(1.8,1.4)
\pccurve[angleA=0,angleB=90,ncurv=1](2.2,1.4)(3.5,.2)
\pccurve[angleA=270,angleB=270,ncurv=1](-.5,-.2)(3.5,-.2)
\pccurve[angleA=180,angleB=90,ncurv=1](1.8,1.4)(-.5,.2)
\rput(1.5,-1.2){$p_i$}
\pccurve[angleA=-90,angleB=-90,ncurv=1](-.8,-.2)(3.8,-.2)
\pccurve[angleA=90,angleB=180,ncurv=1](-.8,.2)(1.5,2.6)
\pccurve[angleA=90,angleB=0,ncurv=1](3.8,.2)(1.5,2.6)
\rput(1.5,2.8){$p_e$}
\end{pspicture}
\end{align*}

where $p_i+p_e=p-1$.
\end{definition}

Equipped with this, we can state and prove the next lemma, where we will care especially about the case $p_i=1$ and $p_e=p-2$.

\begin{lemma} $Net(m,n,p)=A_p Net(m,n,p-1)+B_p Net(m,n,1,p-2).$
\end{lemma}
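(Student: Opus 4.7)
The plan is to follow the proof of the previous lemma (the $p=1$ case, $Net(m,n,1) = A_1 \cdot Net(m,n,0)$) and adapt it to general $p$. Starting from $Net(m,n,p)$, I apply Kim's recursive clasp expansion from Figure 1 simultaneously to the two side clasps, namely those of weights $m+p$ and $n+p$ that are connected by the $p$ bottom strands. Each expansion decomposes its clasp into three summands: the identity strand-strip; the $\alpha$-cap-cup term weighted by $\alpha_{m+p}$ or $\alpha_{n+p}$; and the tetravalent $\beta$-term weighted by $\beta_{m+p}$ or $\beta_{n+p}$. Taking both expansions together yields nine diagrams indexed by pairs of resolution choices, with scalar coefficients the products of the two individual weights (with $1$ in place of the identity resolutions).

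Next, I invoke the cut-path property of the remaining clasps (the middle clasp of weight $m+n$, and the inner sub-clasps produced by the two expansions) to eliminate most of the nine diagrams. As in the $p=1$ case, all mixed $\alpha\beta$ cross terms die here, as do assorted sub-components in the surviving diagrams after straightening a cap/cup or resolving a tetravalent vertex using the elliptic relation. What survives is exactly the following: the $(\text{id},\text{id})$ term, contributing $-\frac{[6][2]}{[3]}Net(m,n,p-1)$ from the extra closed loop; the $(\text{id},\alpha)$ and $(\alpha,\text{id})$ terms, contributing $(\alpha_{m+p}+\alpha_{n+p})Net(m,n,p-1)$ after straightening the cap/cup; the $(\text{id},\beta)$ and $(\beta,\text{id})$ terms, contributing $\frac{[6][2]}{[3]}(\beta_{m+p}+\beta_{n+p})Net(m,n,p-1)$ after resolving the tetravalent vertex via the elliptic relation; the $(\beta,\beta)$ term, contributing $-[4][2]\beta_{m+p}\beta_{n+p}Net(m,n,p-1)$ via the same four-strand expansion trick displayed at the end of the $p=1$ proof; and finally the $(\alpha,\alpha)$ term, the only genuinely new shape, contributing $\alpha_{m+p}\alpha_{n+p}Net(m,n,1,p-2)$.

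Summing the first four contributions reproduces exactly the coefficient $A_p$ defined immediately before the lemma, giving $A_p\cdot Net(m,n,p-1)$; the fifth is $B_p\cdot Net(m,n,1,p-2)$. Together these yield the claimed recursion.

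The main obstacle is the last step: identifying the $(\alpha,\alpha)$ diagram with the new shape $Net(m,n,1,p-2)$ of Definition 7. In the $p=1$ proof this term died for lack of interior room, whereas for $p\geq 2$ the two $\alpha$-resolutions introduce precisely the two size-$1$ top sub-clasps of Definition 7 and leave enough remaining through-strands to realize the case $p_e=0$, $p_i=p-1$, with the two inner sub-clasps of weights $(m+p)-2$ and $(n+p)-2$ absorbing into the left and right clasps of the new diagram after straightening. Verifying this identification is a careful planar-isotopy argument rather than a computation. The secondary chore of cut-path pruning for the dying cross terms is a direct adaptation of the pigeonhole arguments already carried out in the $p=1$ proof, so no new ideas are needed there.
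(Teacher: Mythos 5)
Your proposal follows the paper's proof essentially verbatim: both expand the two side clasps $P_{m+p}$ and $P_{n+p}$ via Kim's recursion, kill the mixed $\alpha\beta$ cross terms by the cut path property, evaluate the $(\mathrm{id},\mathrm{id})$, $(\mathrm{id},\alpha)$, $(\mathrm{id},\beta)$, and $(\beta,\beta)$ terms (the last via the double-cross trick) to assemble $A_p\, Net(m,n,p-1)$, and identify the $(\alpha,\alpha)$ term with $Net(m,n,1,p-2)$ up to isotopy. The only cosmetic difference is that the paper first isolates the outermost of the $p$ connecting strands into a $(p-1)+1$ split before expanding, a bookkeeping step you leave implicit.
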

\begin{proof} The proof method here is very similar, and we begin by isolating the outermost strands into $p-1$ and $1$ to obtain that

\begin{align*}
\begin{pspicture}[shift=-.9](-.5,-1)(3.5,1)
\psframe(-.5,-.2)(.5,.2)
\psframe(1,-.2)(2,.2)
\psframe(2.5,-.2)(3.5,.2)
\pccurve[angleA=60,angleB=120,ncurv=1](.1,.2)(1.4,.2)
\pccurve[angleA=-60,angleB=-120,ncurv=1](.1,-.2)(1.4,-.2)
\pccurve[angleA=60,angleB=120,ncurv=1](1.6,.2)(2.9,.2)
\pccurve[angleA=-60,angleB=-120,ncurv=1](1.6,-.2)(2.9,-.2)
\pccurve[angleA=60,angleB=120,ncurv=1](-.3,.2)(3.2,.2)
\pccurve[angleA=-60,angleB=-120,ncurv=1](-.3,-.2)(3.2,-.2)
\rput(.75,.8){$m$} \rput(2.25,.8){$n$} \rput(1.5,1.7){$p$}
\end{pspicture}
&=
\begin{pspicture}[shift=-.9](-1,-1)(3.7,1.2)
\psframe(-.5,-.2)(.5,.2)
\psframe(1,-.2)(2,.2)
\psframe(2.5,-.2)(3.5,.2)
\pccurve[angleA=60,angleB=120,ncurv=1](.1,.2)(1.4,.2)
\pccurve[angleA=-60,angleB=-120,ncurv=1](.1,-.2)(1.4,-.2)
\pccurve[angleA=60,angleB=120,ncurv=1](1.6,.2)(2.9,.2)
\pccurve[angleA=-60,angleB=-120,ncurv=1](1.6,-.2)(2.9,-.2)
\rput(.75,.8){$m$} \rput(2.25,.8){$n$} 
\pnode(1.5,0){a1}
\psellipse(1.5,0)(2.2,1.8)
\pccurve[angleA=90,angleB=90,ncurv=1](-.2,.2)(3.2,.2)
\pccurve[angleA=270,angleB=270,ncurv=1](-.2,-.2)(3.2,-.2)
\rput(1.5,2){1} \rput(1.5,-1.2){$p-1$}
\end{pspicture}
+(\alpha_{n+p}+\alpha_{m+p})
\begin{pspicture}[shift=-1.4](-1,-2)(4.5,2)
\psframe(-.5,-.2)(.5,.2)
\psframe(1,-.2)(2,.2)
\pccurve[angleA=60,angleB=120,ncurv=1](.1,.2)(1.4,.2)
\pccurve[angleA=-60,angleB=-120,ncurv=1](.1,-.2)(1.4,-.2)
\rput(.75,.8){$m$} \rput(2.25,.8){$n$} 
\qline(3,-1)(3,-.8) \psframe[linecolor=black](2.5,-.8)(3.5,-.4)
\qline(3,-.4)(3,.4)  \qline(3,.8)(3,1)
 \psarc(3.5,-.4){.2}{0}{180}
\qline(3.7,-1)(3.7,-.4) 
\psarc(3.5,.4){.2}{180}{0}
\qline(3.7,.4)(3.7,1)
\psframe[linecolor=black](2.5,.4)(3.5,.8)
\pnode(1.2,-1){a1}
\pnode(1.2,1){a2}
\qline(1.6,-.2)(1.6,-1)
\psarc(2.3,-1){.7}{180}{0}
\qline(1.6,.2)(1.6,1)
\psarc(2.3,1){.7}{0}{180}
\pccurve[angleA=90,angleB=90,ncurv=1](3.7,1)(-.7,0)
\pccurve[angleA=-90,angleB=-90,ncurv=1](-.7,0)(3.7,-1)
\pccurve[angleA=90,angleB=90,ncurv=1](-.2,.2)(3.2,.8)
\pccurve[angleA=270,angleB=270,ncurv=1](-.2,-.2)(3.2,-.8)
\psline(3.2,.4)(3.2,-.4)
\rput(1.5,2.4){1} 
\end{pspicture}
\\[1em]
+(\beta_{n+p}+\beta_{m+p})&
\begin{pspicture}[shift=-1.4](-1,-2)(4.5,2.5)
\psframe(-.5,-.2)(.5,.2)
\psframe(1,-.2)(2,.2)
\pccurve[angleA=60,angleB=120,ncurv=1](.1,.2)(1.4,.2)
\pccurve[angleA=-60,angleB=-120,ncurv=1](.1,-.2)(1.4,-.2)
\rput(.75,.8){$m$} \rput(2.25,.8){$n$} 
\qline(3,-1)(3,-.8) \psframe[linecolor=black](2.5,-.8)(3.5,-.4)
\qline(3,-.4)(3,.4)  \qline(3,.8)(3,1)
\psline(3.3,-.4)(3.7,.4)
\qline(3.7,-1)(3.7,-.4) 
\psline(3.3,.4)(3.7,-.4)
\qline(3.7,.4)(3.7,1)
\psframe[linecolor=black](2.5,.4)(3.5,.8)
\pnode(1.2,-1){a1}
\pnode(1.2,1){a2}
\qline(1.6,-.2)(1.6,-1)
\psarc(2.3,-1){.7}{180}{0}
\qline(1.6,.2)(1.6,1)
\psarc(2.3,1){.7}{0}{180}
\pccurve[angleA=90,angleB=90,ncurv=1](3.7,1)(-.7,0)
\pccurve[angleA=-90,angleB=-90,ncurv=1](-.7,0)(3.7,-1)
\pccurve[angleA=90,angleB=90,ncurv=1](-.2,.2)(3.2,.8)
\pccurve[angleA=270,angleB=270,ncurv=1](-.2,-.2)(3.2,-.8)
\psline(3.2,.4)(3.2,-.4)
\rput(1.5,2.4){1} 
\end{pspicture}
+\alpha_{n+p}\alpha_{m+p}
\begin{pspicture}[shift=-1.4](-1,-2)(4.5,2)
\psframe(1,-.2)(2,.2)
\rput(.75,.8){$m$} \rput(2.25,.8){$n$} 
\qline(3,-1)(3,-.8) \psframe[linecolor=black](2.5,-.8)(3.5,-.4)
\qline(3,-.4)(3,.4)  \qline(3,.8)(3,1)
 \psarc(3.5,-.4){.2}{0}{180}
\qline(3.7,-1)(3.7,-.4) 
\psarc(3.5,.4){.2}{180}{0}
\qline(3.7,.4)(3.7,1)
\psframe[linecolor=black](2.5,.4)(3.5,.8)
\qline(1.6,-.2)(1.6,-1)
\psarc(2.3,-1){.7}{180}{0}
\qline(1.6,.2)(1.6,1)
\psarc(2.3,1){.7}{0}{180}
\pccurve[angleA=90,angleB=90,ncurv=1](3.7,1)(-.7,1)
\pccurve[angleA=-90,angleB=-90,ncurv=1](-.7,-1)(3.7,-1)
\qline(0,-1)(0,-.8) \psframe[linecolor=black](-.5,-.8)(.5,-.4)
\qline(0,-.4)(0,.4)  \qline(0,.8)(0,1)
 \psarc(-.5,-.4){.2}{0}{180}
\qline(-.7,-1)(-.7,-.4) 
\psarc(-.5,.4){.2}{180}{0}
\qline(-.7,.4)(-.7,1)
\psframe[linecolor=black](-.5,.4)(.5,.8)
\qline(1.4,-.2)(1.4,-1)
\psarc(.7,-1){.7}{180}{0}
\qline(1.4,.2)(1.4,1)
\psarc(.7,1){.7}{0}{180}
\pccurve[angleA=90,angleB=90,ncurv=1](-.2,.8)(3.2,.8)
\pccurve[angleA=270,angleB=270,ncurv=1](-.2,-.8)(3.2,-.8)
\psline(3.2,.4)(3.2,-.4)
\psline(-.2,.4)(-.2,-.4)
\end{pspicture}
\\[3em]
+(\alpha_{m+p}\beta_{n+p}+\beta_{m+p}\alpha_{n+p})&
\begin{pspicture}[shift=-1.4](-1,-2)(4.5,2)
\psframe(1,-.2)(2,.2)
\rput(.75,.8){$m$} \rput(2.25,.8){$n$} 
\qline(3,-1)(3,-.8) \psframe[linecolor=black](2.5,-.8)(3.5,-.4)
\qline(3,-.4)(3,.4)  \qline(3,.8)(3,1)
\qline(3.7,-1)(3.7,-.4) 
\qline(3.7,.4)(3.3,-.4)
\qline(3.7,-.4)(3.3,.4)
\qline(3.7,.4)(3.7,1)
\psframe[linecolor=black](2.5,.4)(3.5,.8)
\qline(1.6,-.2)(1.6,-1)
\psarc(2.3,-1){.7}{180}{0}
\qline(1.6,.2)(1.6,1)
\psarc(2.3,1){.7}{0}{180}
\pccurve[angleA=90,angleB=90,ncurv=1](3.7,1)(-.7,1)
\pccurve[angleA=-90,angleB=-90,ncurv=1](-.7,-1)(3.7,-1)
\qline(0,-1)(0,-.8) \psframe[linecolor=black](-.5,-.8)(.5,-.4)
\qline(0,-.4)(0,.4)  \qline(0,.8)(0,1)
 \psarc(-.5,-.4){.2}{0}{180}
\qline(-.7,-1)(-.7,-.4) 
\psarc(-.5,.4){.2}{180}{0}
\qline(-.7,.4)(-.7,1)
\psframe[linecolor=black](-.5,.4)(.5,.8)
\qline(1.4,-.2)(1.4,-1)
\psarc(.7,-1){.7}{180}{0}
\qline(1.4,.2)(1.4,1)
\psarc(.7,1){.7}{0}{180}
\pccurve[angleA=90,angleB=90,ncurv=1](-.2,.8)(3.2,.8)
\pccurve[angleA=270,angleB=270,ncurv=1](-.2,-.8)(3.2,-.8)
\psline(3.2,.4)(3.2,-.4)
\psline(-.2,.4)(-.2,-.4)
\end{pspicture}
+(\beta_{n+p}\beta_{m+p})
\begin{pspicture}[shift=-1.4](-1,-2)(4.5,2)
\psframe(1,-.2)(2,.2)
\rput(.75,.8){$m$} \rput(2.25,.8){$n$} 
\qline(3,-1)(3,-.8) \psframe[linecolor=black](2.5,-.8)(3.5,-.4)
\qline(3,-.4)(3,.4)  \qline(3,.8)(3,1)
\qline(3.7,-1)(3.7,-.4) 
\qline(3.7,.4)(3.3,-.4)
\qline(3.7,-.4)(3.3,.4)
\qline(3.7,.4)(3.7,1)
\psframe[linecolor=black](2.5,.4)(3.5,.8)
\qline(1.6,-.2)(1.6,-1)
\psarc(2.3,-1){.7}{180}{0}
\qline(1.6,.2)(1.6,1)
\psarc(2.3,1){.7}{0}{180}
\pccurve[angleA=90,angleB=90,ncurv=1](3.7,1)(-.7,1)
\pccurve[angleA=-90,angleB=-90,ncurv=1](-.7,-1)(3.7,-1)
\qline(0,-1)(0,-.8) \psframe[linecolor=black](-.5,-.8)(.5,-.4)
\qline(0,-.4)(0,.4)  \qline(0,.8)(0,1)
\qline(-.7,-1)(-.7,-.4) 
\qline(-.7,.4)(-.3,-.4)
\qline(-.7,-.4)(-.3,.4)
\qline(-.7,.4)(-.7,1)
\psframe[linecolor=black](-.5,.4)(.5,.8)
\qline(1.4,-.2)(1.4,-1)
\psarc(.7,-1){.7}{180}{0}
\qline(1.4,.2)(1.4,1)
\psarc(.7,1){.7}{0}{180}
\pccurve[angleA=90,angleB=90,ncurv=1](-.2,.8)(3.2,.8)
\pccurve[angleA=270,angleB=270,ncurv=1](-.2,-.8)(3.2,-.8)
\psline(3.2,.4)(3.2,-.4)
\psline(-.2,.4)(-.2,-.4)
\end{pspicture}
\end{align*}

\vspace{4 mm}

We handle the first three summands precisely as before and notice that the last one can also be handled with the ``double cross trick.'' Collecting terms, we see that we obtain precisely $A_p Net(m,n,p-1)$. The fourth summand is precisely $Net(m,n,1,p-2)$ up to isotopy, giving us the term $B_p Net(m,n,1,p-2)$ as claimed. Finally, the penultimate summand dies by the cut path property, proving the claim.
\end{proof}
Our idea will now be to calculate $Net(m,n,p-1,0)$ and to reduce our calculation of $Net(m,n,1,p-2)$ to this case by recursively expressing $Net(m,n,p_e,p_i)$ in terms of $Net(m,n,p_e+1,p_i-1)$. To this end, we prove the following two lemmas

\begin{lemma}
$Net(m,n,p-1,0)=A_1 Net(m,n,p-1)$
\end{lemma}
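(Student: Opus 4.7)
The plan is to mirror the proof of Lemma 6, which established $Net(m,n,1) = A_1 \cdot Net(m,n,0)$, and apply the same strategy to the slightly enriched diagram $Net(m,n,p-1,0)$. The key observation is that $Net(m,n,p-1,0)$ differs from $Net(m,n,p-1)$ by the insertion of a single strand through a $P_1$ clasp, and locally this extra strand looks exactly like the lone strand that distinguishes $Net(m,n,1)$ from $Net(m,n,0)$. Consequently the same local Kim expansion should produce the same coefficient $A_1$.

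Concretely, I would apply Kim's double clasp expansion at the position of the $P_1$ clasp and its neighboring boundary clasp, generating the six summands with coefficients $1$, $\alpha_{m+1}+\alpha_{n+1}$, $\beta_{m+1}+\beta_{n+1}$, $\alpha_{m+1}\alpha_{n+1}$, $\alpha_{m+1}\beta_{n+1}+\beta_{m+1}\alpha_{n+1}$, and $\beta_{m+1}\beta_{n+1}$, exactly matching the expansion of $Net(m,n,1)$. The subscripts $m+1, n+1$ (rather than $m+p, n+p$) arise because the $P_1$ structure insulates a single strand from the bulk of the $p-1$ outer strands, so the local expansion is effectively the one from Lemma 6. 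Each summand is then evaluated in the same way: the identity term resolves to a loop contributing $-\tfrac{[6][2]}{[3]}$, the $\alpha$-linear terms survive by clasp idempotence and contribute $(\alpha_{m+1}+\alpha_{n+1}) \cdot Net(m,n,p-1)$, the $\beta$-linear terms produce loops resolving to $\tfrac{[6][2]}{[3]}(\beta_{m+1}+\beta_{n+1}) \cdot Net(m,n,p-1)$, the $\alpha\alpha$ and mixed $\alpha\beta$ terms die by the cut path property, and the $\beta\beta$ term is handled via the double-cross trick to contribute $-[4][2]\beta_{m+1}\beta_{n+1} \cdot Net(m,n,p-1)$. Collecting these contributions gives precisely $A_1 \cdot Net(m,n,p-1)$.

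The main obstacle is verifying that the cut path property genuinely annihilates the $\alpha\alpha$ and $\alpha\beta$ summands in this enriched setting, since the diagram now contains the additional $p-1$ outer strands attached to the boundary clasps of weights $m+p$ and $n+p$. One must check that the induced cut paths still have weight strictly less than these clasps. Because the $P_1$ structure locally separates the single extra strand from the bulk of the outer strands, these cut paths cross only the outer strands plus a small local configuration, and their weight remains below the boundary clasp weights. Establishing this localization precisely, and rechecking the double-cross trick in the slightly altered context, constitute the only technical points beyond what was already done in Lemma 6.
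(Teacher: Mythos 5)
Your proposal matches the paper's own proof: the paper likewise applies Kim's double clasp expansion to $Net(m,n,p-1,0)$, obtains the same six summands with coefficients $1$, $\alpha_{m+1}+\alpha_{n+1}$, $\beta_{m+1}+\beta_{n+1}$, $\alpha_{m+1}\alpha_{n+1}$, $\alpha_{m+1}\beta_{n+1}+\beta_{m+1}\alpha_{n+1}$, $\beta_{m+1}\beta_{n+1}$, and then reduces them exactly as you describe (loop constant, idempotence, cut path annihilation of the $\alpha\alpha$ and mixed terms, and the double-cross trick for $\beta\beta$) to collect $A_1\,Net(m,n,p-1)$. If anything, your write-up is more explicit than the paper, which simply displays the expansion and asserts that the summands ``reduce exactly as claimed.''
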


\begin{proof}

\begin{align*}
Net(m,n,p-1,0)
&=
\begin{pspicture}[shift=-2](-1,-2)(4.5,2)
\psframe(-1,-.2)(0,.2)
\psframe(1,-.2)(2,.2)
\psframe(3,-.2)(4,.2)
\psframe(.8,1)(1.2,2)
\psframe(1.8,1)(2.2,2)
 \psarc(2.2,2.25){.15}{270}{90}
 \psarc(.8,2.25){.15}{90}{270}
 \psline(.8,2.1)(2.2,2.1)
 \psline(.8,2.4)(2.2,2.4)
 \pccurve[angleA=90,angleB=180,ncurv=1](-.2,.2)(.8,1.2)
 \pccurve[angleA=270,angleB=270,ncurv=1](-.2,-.2)(1.4,-.2)
 \pccurve[angleA=0,angleB=90,ncurv=1](2.2,1.2)(3.2,.2)
 \psarc(1.2,1){.2}{0}{90}
 \psline(1.4,1)(1.4,.2)
 \psarc(1.8,1){.2}{90}{180}
 \psline(1.6,1)(1.6,.2)
 \pccurve[angleA=270,angleB=270,ncurv=1](1.6,-.2)(3.2,-.2)
 \rput(.5,.3){$m$} \rput(2.5,.3){$n$}
\pccurve[angleA=-90,angleB=-90,ncurv=1](-.8,-.2)(3.8,-.2)
\pccurve[angleA=90,angleB=180,ncurv=1](-.8,.2)(1.5,2.8)
\pccurve[angleA=90,angleB=0,ncurv=1](3.8,.2)(1.5,2.8)
\rput(1.5,3){$p_e$}
\end{pspicture}
+
(\alpha_{n+1}+\alpha_{m+1})
\begin{pspicture}[shift=-2](-1,-2)(4.5,2)
\psframe(-1,-.2)(0,.2)
\psframe(1,-.2)(2,.2)
\psframe(3,-.2)(4,.2)
\psframe(.8,1)(1.2,2)
\psframe(1.8,1)(2.2,2)
\psframe(2.6,1)(3,2)
 \psarc(.8,2.25){.15}{90}{270}
 \psline(.8,2.1)(2.2,2.1)
 \psline(.8,2.4)(2.2,2.4)
 \psline(2.2,1.2)(2.6,1.2)
 \pccurve[angleA=90,angleB=180,ncurv=1](-.2,.2)(.8,1.2)
 \pccurve[angleA=270,angleB=270,ncurv=1](-.2,-.2)(1.4,-.2)
 \psarc(1.2,1){.2}{0}{90}
 \psline(1.4,1)(1.4,.2)
 \psarc(1.8,1){.2}{90}{180}
 \psline(1.6,1)(1.6,.2)
 \pccurve[angleA=270,angleB=270,ncurv=1](1.6,-.2)(3.2,-.2)
 \rput(.5,.3){$m$} \rput(2.5,.3){$n$}
\pccurve[angleA=-90,angleB=-90,ncurv=1](-.8,-.2)(3.8,-.2)
\pccurve[angleA=90,angleB=180,ncurv=1](-.8,.2)(1.5,2.8)
\pccurve[angleA=90,angleB=0,ncurv=1](3.8,.2)(1.5,2.8)
\rput(1.5,3){$p_e$}
\psarc(3,1){.2}{0}{90}
\psline(3.2,1)(3.2,.2)
\psarc(2.2,1.95){.15}{270}{90}
\psarc(2.6,1.95){.15}{90}{270}
\psarc(2.6,2.25){.15}{270}{90}
\psline(2.6,2.4)(2.2,2.4)
\end{pspicture}
\\[3em]
+
\alpha_{m+1}\alpha_{n+1}&\begin{pspicture}[shift=-2](-1,-2)(4.5,2)
\psframe(-1,-.2)(0,.2)
\psframe(1,-.2)(2,.2)
\psframe(3,-.2)(4,.2)
\psframe(.8,1)(1.2,2)
\psframe(1.8,1)(2.2,2)
\psframe(0,1)(.4,2)
\psframe(2.6,1)(3,2)
 \psline(.8,2.1)(2.2,2.1)
 \pccurve[angleA=270,angleB=270,ncurv=1](-.2,-.2)(1.4,-.2)
 \psarc(1.2,1){.2}{0}{90}
 \psline(1.4,1)(1.4,.2)
 \psarc(1.8,1){.2}{90}{180}
 \psline(1.6,1)(1.6,.2)
 \pccurve[angleA=270,angleB=270,ncurv=1](1.6,-.2)(3.2,-.2)
 \rput(.5,.3){$m$} \rput(2.5,.3){$n$}
\pccurve[angleA=-90,angleB=-90,ncurv=1](-.8,-.2)(3.8,-.2)
\pccurve[angleA=90,angleB=180,ncurv=1](-.8,.2)(1.5,2.8)
\pccurve[angleA=90,angleB=0,ncurv=1](3.8,.2)(1.5,2.8)
\rput(1.5,3){$p_e$}
\psline(.4,1.2)(.8,1.2)
\psarc(0,1){.2}{90}{180}
\psline(-.2,1)(-.2,.2)
\psarc(.8,1.95){.15}{90}{270}
\psarc(.4,1.95){.15}{270}{90}
\psarc(.4,2.25){.15}{90}{270}
\psline(.4,2.4)(2.2,2.4)
\psarc(3,1){.2}{0}{90}
\psline(2.2,1.2)(2.6,1.2)
\psline(3.2,1)(3.2,.2)
\psarc(2.2,1.95){.15}{270}{90}
\psarc(2.6,1.95){.15}{90}{270}
\psarc(2.6,2.25){.15}{270}{90}
\psline(2.6,2.4)(2.2,2.4)
\end{pspicture}
+
(\beta_{n+1}+\beta_{m+1})
\begin{pspicture}[shift=-2](-1,-2)(4.5,2)
\psframe(-1,-.2)(0,.2)
\psframe(1,-.2)(2,.2)
\psframe(3,-.2)(4,.2)
\psframe(.8,1)(1.2,2)
\psframe(1.8,1)(2.2,2)
\psframe(2.6,1)(3,2)
 \psarc(.8,2.25){.15}{90}{270}
 \psline(.8,2.1)(2.2,2.1)
 \psline(.8,2.4)(2.2,2.4)
 \psline(2.2,1.2)(2.6,1.2)
 \pccurve[angleA=90,angleB=180,ncurv=1](-.2,.2)(.8,1.2)
 \pccurve[angleA=270,angleB=270,ncurv=1](-.2,-.2)(1.4,-.2)
 \psarc(1.2,1){.2}{0}{90}
 \psline(1.4,1)(1.4,.2)
 \psarc(1.8,1){.2}{90}{180}
 \psline(1.6,1)(1.6,.2)
 \pccurve[angleA=270,angleB=270,ncurv=1](1.6,-.2)(3.2,-.2)
 \rput(.5,.3){$m$} \rput(2.5,.3){$n$}
\pccurve[angleA=-90,angleB=-90,ncurv=1](-.8,-.2)(3.8,-.2)
\pccurve[angleA=90,angleB=180,ncurv=1](-.8,.2)(1.5,2.8)
\pccurve[angleA=90,angleB=0,ncurv=1](3.8,.2)(1.5,2.8)
\rput(1.5,3){$p_e$}
\psarc(3,1){.2}{0}{90}
\psline(3.2,1)(3.2,.2)
\psarc(2.6,2.25){.15}{270}{90}
\psline(2.6,2.4)(2.2,2.4)
\psline(2.2,1.8)(2.6,2.1)
\psline(2.6,1.8)(2.2,2.1)
\end{pspicture}
\\[3em]
+
\beta_{n+1}\beta_{m+1}&\begin{pspicture}[shift=-2](-1,-2)(4.5,2)
\psframe(-1,-.2)(0,.2)
\psframe(1,-.2)(2,.2)
\psframe(3,-.2)(4,.2)
\psframe(.8,1)(1.2,2)
\psframe(1.8,1)(2.2,2)
\psframe(0,1)(.4,2)
\psframe(2.6,1)(3,2)
 \psline(.8,2.1)(2.2,2.1)
 \pccurve[angleA=270,angleB=270,ncurv=1](-.2,-.2)(1.4,-.2)
 \psarc(1.2,1){.2}{0}{90}
 \psline(1.4,1)(1.4,.2)
 \psarc(1.8,1){.2}{90}{180}
 \psline(1.6,1)(1.6,.2)
 \pccurve[angleA=270,angleB=270,ncurv=1](1.6,-.2)(3.2,-.2)
 \rput(.5,.3){$m$} \rput(2.5,.3){$n$}
\pccurve[angleA=-90,angleB=-90,ncurv=1](-.8,-.2)(3.8,-.2)
\pccurve[angleA=90,angleB=180,ncurv=1](-.8,.2)(1.5,2.8)
\pccurve[angleA=90,angleB=0,ncurv=1](3.8,.2)(1.5,2.8)
\rput(1.5,3){$p_e$}
\psline(.4,1.2)(.8,1.2)
\psarc(0,1){.2}{90}{180}
\psline(-.2,1)(-.2,.2)
\psline(.4,2.1)(.8,1.8)
\psline(.8,2.1)(.4,1.8)
\psarc(.4,2.25){.15}{90}{270}
\psline(.4,2.4)(2.2,2.4)
\psarc(3,1){.2}{0}{90}
\psline(2.2,1.2)(2.6,1.2)
\psline(3.2,1)(3.2,.2)
\psline(2.2,1.8)(2.6,2.1)
\psline(2.6,1.8)(2.2,2.1)
\psarc(2.6,2.25){.15}{270}{90}
\psline(2.6,2.4)(2.2,2.4)
\end{pspicture}
\hspace{-5 mm}
+(\alpha_{n+1}\beta_{m+1}+\alpha_{m+1}\beta_{n+1})\begin{pspicture}[shift=-2](-1,-2)(4.5,2)
\psframe(-1,-.2)(0,.2)
\psframe(1,-.2)(2,.2)
\psframe(3,-.2)(4,.2)
\psframe(.8,1)(1.2,2)
\psframe(1.8,1)(2.2,2)
\psframe(0,1)(.4,2)
\psframe(2.6,1)(3,2)
 \psline(.8,2.1)(2.2,2.1)
 \pccurve[angleA=270,angleB=270,ncurv=1](-.2,-.2)(1.4,-.2)
 \psarc(1.2,1){.2}{0}{90}
 \psline(1.4,1)(1.4,.2)
 \psarc(1.8,1){.2}{90}{180}
 \psline(1.6,1)(1.6,.2)
 \pccurve[angleA=270,angleB=270,ncurv=1](1.6,-.2)(3.2,-.2)
 \rput(.5,.3){$m$} \rput(2.5,.3){$n$}
\pccurve[angleA=-90,angleB=-90,ncurv=1](-.8,-.2)(3.8,-.2)
\pccurve[angleA=90,angleB=180,ncurv=1](-.8,.2)(1.5,2.8)
\pccurve[angleA=90,angleB=0,ncurv=1](3.8,.2)(1.5,2.8)
\rput(1.5,3){$p_e$}
\psline(.4,1.2)(.8,1.2)
\psarc(0,1){.2}{90}{180}
\psline(-.2,1)(-.2,.2)
\psarc(.8,1.95){.15}{90}{270}
\psarc(.4,1.95){.15}{270}{90}
\psarc(.4,2.25){.15}{90}{270}
\psline(.4,2.4)(2.2,2.4)
\psarc(3,1){.2}{0}{90}
\psline(2.2,1.2)(2.6,1.2)
\psline(3.2,1)(3.2,.2)
\psline(2.2,1.8)(2.6,2.1)
\psline(2.2,2.1)(2.6,1.8)
\psarc(2.6,2.25){.15}{270}{90}
\psline(2.6,2.4)(2.2,2.4)
\end{pspicture}
\end{align*}
which all reduce exactly as claimed by the annihilation property and calculations similar to those in previous lemmas.

\end{proof}

We now arrive at the final lemma needed for our recursive evaluation:
\begin{lemma} $Net(m,n,p_e,p_i)=A_{p_i+1}Net(m,n,p-1)+B_{p_i+1}Net(m,n,p_{e+1},p_{i+1})$
\end{lemma}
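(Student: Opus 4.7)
The plan is to apply Kim's double-clasp expansion (Figure 1) to the innermost clasp of weight $p_i+1$ in the diagram representing $Net(m,n,p_e,p_i)$. This is precisely the same strategy used in Lemmas 7 and 8, adapted to the nested setting. Expanding produces three summands with coefficients $1$, $\alpha_{p_i+1}$, and $\beta_{p_i+1}$; these then interact with the two capped loops that close the innermost clasp, producing six terms in total after sorting by coefficient type ($1$, $\alpha$, $\beta$, $\alpha\alpha$, $\alpha\beta$, $\beta\beta$).

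The first step is to identify which terms reduce to a multiple of $Net(m,n,p-1)$ and which produce new structure. The identity summand closes the innermost clasp into a loop, contributing $-\frac{[6][2]}{[3]} Net(m,n,p-1)$ via the loop constant. The $\alpha$-type summands, where a cap reconnects to the $P_m$ or $P_n$ clasp, collapse (via idempotence and cap absorption as in Lemma 7) to $(\alpha_{m+p_i+1}+\alpha_{n+p_i+1})Net(m,n,p-1)$. The $\beta$-type summands introduce a single tetravalent vertex which, upon interacting with the cap and adjacent loop, reduce to $\frac{[6][2]}{[3]}(\beta_{m+p_i+1}+\beta_{n+p_i+1})Net(m,n,p-1)$ as in the base cases.

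The mixed $\alpha\beta$ and $\beta\alpha$ summands vanish by the cut path property: the tetravalent vertex on one side together with the cap configuration on the other forces a cut path of strictly smaller weight through the interior region, which is killed by one of the outer clasps. The pure $\beta\beta$ summand has two tetravalent vertices in the innermost region and is handled by the \emph{double cross trick} from the proof of Lemma 7 — pulling one of the crossings to the opposite boundary and applying the relation
\[
\begin{pspicture}[shift=-.6](-1.2,-.7)(1.2,.7) \pnode(1.2;30){a1}
\pnode(1.2;150){a2} \pnode(1.2;210){a3} \pnode(1.2;330){a4}
\pnode(.6;90){b1} \pnode(.3;90){b2} \pnode(.3;270){b3}
\nccurve[angleA=315,angleB=180,ncurv=1]{a2}{b3}
\nccurve[angleA=225,angleB=0,ncurv=1]{a1}{b3}
\nccurve[angleA=135,angleB=0,ncurv=1]{a4}{b2}
\nccurve[angleA=45,angleB=180,ncurv=1]{a3}{b2}
\end{pspicture}
= -[2]^2\begin{pspicture}[shift=-.6](-.8,-.7)(.8,.7) \pnode(.84;45){a1}
\pnode(.84;135){a2} \pnode(.84;225){a3} \pnode(.84;315){a4}
\ncline{a2}{a4} \ncline{a3}{a1} \end{pspicture}
-[4][2]\begin{pspicture}[shift=-.6](-.7,-.7)(.7,.7) \pnode(.84;45){a1}
\pnode(.84;135){a2} \pnode(.84;225){a3} \pnode(.84;315){a4}
\nccurve[angleA=135,angleB=225,ncurv=1]{a4}{a1}
\nccurve[angleA=45,angleB=-45,ncurv=1]{a3}{a2}
\end{pspicture}
\]
where the first term dies by cut path, leaving the contribution $-[4][2]\beta_{m+p_i+1}\beta_{n+p_i+1}Net(m,n,p-1)$. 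Summing these five contributions yields precisely the coefficient $A_{p_i+1}$ in front of $Net(m,n,p-1)$.

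Finally, the pure $\alpha\alpha$ summand places a cap on \emph{each} of the two resulting $P_{p_i,0}$ clasps; by isotopy this sends one additional strand from the interior wrapping to the exterior wrapping, producing exactly the nested diagram $Net(m,n,p_e+1,p_i-1)$ with coefficient $\alpha_{m+p_i+1}\alpha_{n+p_i+1}=B_{p_i+1}$. The main obstacle will be verifying the cut-path arguments for the $\alpha\beta$ mixed terms in this more deeply nested setting, since the cut paths must traverse the additional $p_e$ and $p_i$ layers of wrapping; however, the weight ordering ensures that the relevant separating paths still fall below the weight of the outer clasps $P_m$ and $P_n$, so the annihilation property applies verbatim.
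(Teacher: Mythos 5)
Your proposal is correct and is essentially the paper's own argument: apply Kim's double-clasp expansion to the two intermediate clasps of $Net(m,n,p_e,p_i)$ and sort the six resulting summands exactly as in the preceding lemmas, with the $\alpha\alpha$ term yielding the nested net $Net(m,n,p_e+1,p_i-1)$ and the remaining nonvanishing terms collapsing onto $Net(m,n,p-1)$ --- indeed the paper's printed proof consists only of the displayed expansion, so your term-by-term accounting is more explicit than what appears there, and your subscripts $m+p_i+1$, $n+p_i+1$ are the ones consistent with the stated coefficients $A_{p_i+1}$, $B_{p_i+1}$ (the paper's display carries the subscripts $m+1$, $n+1$ over from the $p_i=0$ case). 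The one slip is in your opening sentence: the clasps being expanded have weights $m+p_i+1$ and $n+p_i+1$, not $p_i+1$, as your own coefficient formulas later confirm.
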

\begin{proof}
\begin{flalign*}
 Net(m,n,p_e,p_i)
&=
\begin{pspicture}[shift=-2](-1,-2)(4.5,2)
\psframe(-1,-.2)(0,.2)
\psframe(1,-.2)(2,.2)
\psframe(3,-.2)(4,.2)
\psframe(.8,1)(1.2,2)
\psframe(1.8,1)(2.2,2)
 \psarc(2.2,2.25){.15}{270}{90}
 \psarc(.8,2.25){.15}{90}{270}
 \psline(.8,2.1)(2.2,2.1)
 \psline(.8,2.4)(2.2,2.4)
 \pccurve[angleA=90,angleB=180,ncurv=1](-.2,.2)(.8,1.2)
 \pccurve[angleA=270,angleB=270,ncurv=1](-.2,-.2)(1.4,-.2)
 \pccurve[angleA=0,angleB=90,ncurv=1](2.2,1.2)(3.2,.2)
 \psarc(1.2,1){.2}{0}{90}
 \psline(1.4,1)(1.4,.2)
 \psarc(1.8,1){.2}{90}{180}
 \psline(1.6,1)(1.6,.2)
 \pccurve[angleA=270,angleB=270,ncurv=1](1.6,-.2)(3.2,-.2)
 \rput(.5,.3){$m$} \rput(2.5,.3){$n$}
\pccurve[angleA=-90,angleB=-90,ncurv=1](-.8,-.2)(3.8,-.2)
\pccurve[angleA=90,angleB=180,ncurv=1](-.8,.2)(1.5,2.8)
\pccurve[angleA=90,angleB=0,ncurv=1](3.8,.2)(1.5,2.8)
\rput(1.5,3){$p_e$}
\pccurve[angleA=-90,angleB=-90,ncurv=1](3.5,-.2)(-.5,-.2)
\pccurve[angleA=90,angleB=0,ncurv=1](3.5,.2)(2.2,1.5)
\psline(1.2,1.5)(1.8,1.5)
\pccurve[angleA=90,angleB=180,ncurv=1](-.5,.2)(.8,1.5)
\rput(1.5,-1.5){$p_i$}
\end{pspicture}
+
(\alpha_{n+1}+\alpha_{m+1}) \hspace{2 mm}
\begin{pspicture}[shift=-2](-1,-2)(4.5,2)
\psframe(-1,-.2)(0,.2)
\psframe(1,-.2)(2,.2)
\psframe(3,-.2)(4,.2)
\psframe(.8,1)(1.2,2)
\psframe(1.8,1)(2.2,2)
\psframe(2.6,1)(3,2)
 \psarc(.8,2.25){.15}{90}{270}
 \psline(.8,2.1)(2.2,2.1)
 \psline(.8,2.4)(2.2,2.4)
 \psline(2.2,1.2)(2.6,1.2)
 \pccurve[angleA=90,angleB=180,ncurv=1](-.2,.2)(.8,1.2)
 \pccurve[angleA=270,angleB=270,ncurv=1](-.2,-.2)(1.4,-.2)
 \psarc(1.2,1){.2}{0}{90}
 \psline(1.4,1)(1.4,.2)
 \psarc(1.8,1){.2}{90}{180}
 \psline(1.6,1)(1.6,.2)
 \pccurve[angleA=270,angleB=270,ncurv=1](1.6,-.2)(3.2,-.2)
 \rput(.5,.3){$m$} \rput(2.5,.3){$n$}
\pccurve[angleA=-90,angleB=-90,ncurv=1](-.8,-.2)(3.8,-.2)
\pccurve[angleA=90,angleB=180,ncurv=1](-.8,.2)(1.5,2.8)
\pccurve[angleA=90,angleB=0,ncurv=1](3.8,.2)(1.5,2.8)
\rput(1.5,3){$p_e$}
\psarc(3,1){.2}{0}{90}
\psline(3.2,1)(3.2,.2)
\psarc(2.2,1.95){.15}{270}{90}
\psarc(2.6,1.95){.15}{90}{270}
\psarc(2.6,2.25){.15}{270}{90}
\psline(2.6,2.4)(2.2,2.4)
\pccurve[angleA=-90,angleB=-90,ncurv=1](3.5,-.2)(-.5,-.2)
\pccurve[angleA=90,angleB=0,ncurv=1](3.5,.2)(3,1.5)
\psline(1.2,1.5)(1.8,1.5)
\pccurve[angleA=90,angleB=180,ncurv=1](-.5,.2)(.8,1.5)
\psline(2.6,1.5)(2.2,1.5)
\rput(1.5,-1.5){$p_i$}
\end{pspicture}
\\[3em]
+
\alpha_{m+1}\alpha_{n+1}&\begin{pspicture}[shift=-2](-1,-2)(4.5,2)
\psframe(-1,-.2)(0,.2)
\psframe(1,-.2)(2,.2)
\psframe(3,-.2)(4,.2)
\psframe(.8,1)(1.2,2)
\psframe(1.8,1)(2.2,2)
\psframe(0,1)(.4,2)
\psframe(2.6,1)(3,2)
 \psline(.8,2.1)(2.2,2.1)
 \pccurve[angleA=270,angleB=270,ncurv=1](-.2,-.2)(1.4,-.2)
 \psarc(1.2,1){.2}{0}{90}
 \psline(1.4,1)(1.4,.2)
 \psarc(1.8,1){.2}{90}{180}
 \psline(1.6,1)(1.6,.2)
 \pccurve[angleA=270,angleB=270,ncurv=1](1.6,-.2)(3.2,-.2)
 \rput(.5,.3){$m$} \rput(2.5,.3){$n$}
\pccurve[angleA=-90,angleB=-90,ncurv=1](-.8,-.2)(3.8,-.2)
\pccurve[angleA=90,angleB=180,ncurv=1](-.8,.2)(1.5,2.8)
\pccurve[angleA=90,angleB=0,ncurv=1](3.8,.2)(1.5,2.8)
\rput(1.5,3){$p_e$}
\psline(.4,1.2)(.8,1.2)
\psarc(0,1){.2}{90}{180}
\psline(-.2,1)(-.2,.2)
\psarc(.8,1.95){.15}{90}{270}
\psarc(.4,1.95){.15}{270}{90}
\psarc(.4,2.25){.15}{90}{270}
\psline(.4,2.4)(2.2,2.4)
\psarc(3,1){.2}{0}{90}
\psline(2.2,1.2)(2.6,1.2)
\psline(3.2,1)(3.2,.2)
\psarc(2.2,1.95){.15}{270}{90}
\psarc(2.6,1.95){.15}{90}{270}
\psarc(2.6,2.25){.15}{270}{90}
\psline(2.6,2.4)(2.2,2.4)
\psarc(3,1){.2}{0}{90}
\psline(3.2,1)(3.2,.2)
\psarc(2.2,1.95){.15}{270}{90}
\psarc(2.6,1.95){.15}{90}{270}
\psarc(2.6,2.25){.15}{270}{90}
\psline(2.6,2.4)(2.2,2.4)
\pccurve[angleA=-90,angleB=-90,ncurv=1](3.5,-.2)(-.5,-.2)
\pccurve[angleA=90,angleB=0,ncurv=1](3.5,.2)(3,1.5)
\psline(1.2,1.5)(1.8,1.5)
\pccurve[angleA=90,angleB=180,ncurv=1](-.5,.2)(0,1.5)
\psline(2.6,1.5)(2.2,1.5)
\psline(.4,1.5)(.8,1.5)
\rput(1.5,-1.5){$p_i$}
\end{pspicture}
+
(\beta_{n+1}+\beta_{m+1}) \hspace{7 mm}
\begin{pspicture}[shift=-2](-1,-2)(4.5,2)
\psframe(-1,-.2)(0,.2)
\psframe(1,-.2)(2,.2)
\psframe(3,-.2)(4,.2)
\psframe(.8,1)(1.2,2)
\psframe(1.8,1)(2.2,2)
\psframe(2.6,1)(3,2)
 \psarc(.8,2.25){.15}{90}{270}
 \psline(.8,2.1)(2.2,2.1)
 \psline(.8,2.4)(2.2,2.4)
 \psline(2.2,1.2)(2.6,1.2)
 \pccurve[angleA=90,angleB=180,ncurv=1](-.2,.2)(.8,1.2)
 \pccurve[angleA=270,angleB=270,ncurv=1](-.2,-.2)(1.4,-.2)
 \psarc(1.2,1){.2}{0}{90}
 \psline(1.4,1)(1.4,.2)
 \psarc(1.8,1){.2}{90}{180}
 \psline(1.6,1)(1.6,.2)
 \pccurve[angleA=270,angleB=270,ncurv=1](1.6,-.2)(3.2,-.2)
 \rput(.5,.3){$m$} \rput(2.5,.3){$n$}
\pccurve[angleA=-90,angleB=-90,ncurv=1](-.8,-.2)(3.8,-.2)
\pccurve[angleA=90,angleB=180,ncurv=1](-.8,.2)(1.5,2.8)
\pccurve[angleA=90,angleB=0,ncurv=1](3.8,.2)(1.5,2.8)
\rput(1.5,3){$p_e$}
\psarc(3,1){.2}{0}{90}
\psline(3.2,1)(3.2,.2)
\psarc(2.6,2.25){.15}{270}{90}
\psline(2.6,2.4)(2.2,2.4)
\psline(2.2,1.8)(2.6,2.1)
\psline(2.6,1.8)(2.2,2.1)
\psarc(3,1){.2}{0}{90}
\psline(3.2,1)(3.2,.2)
\pccurve[angleA=-90,angleB=-90,ncurv=1](3.5,-.2)(-.5,-.2)
\pccurve[angleA=90,angleB=0,ncurv=1](3.5,.2)(3,1.5)
\psline(1.2,1.5)(1.8,1.5)
\pccurve[angleA=90,angleB=180,ncurv=1](-.5,.2)(.8,1.5)
\psline(2.6,1.5)(2.2,1.5)
\rput(1.5,-1.5){$p_i$}
\end{pspicture}
\\[3em]
+
\beta_{n+1}\beta_{m+1}&\begin{pspicture}[shift=-2](-1,-2)(4.5,2)
\psframe(-1,-.2)(0,.2)
\psframe(1,-.2)(2,.2)
\psframe(3,-.2)(4,.2)
\psframe(.8,1)(1.2,2)
\psframe(1.8,1)(2.2,2)
\psframe(0,1)(.4,2)
\psframe(2.6,1)(3,2)
 \psline(.8,2.1)(2.2,2.1)
 \pccurve[angleA=270,angleB=270,ncurv=1](-.2,-.2)(1.4,-.2)
 \psarc(1.2,1){.2}{0}{90}
 \psline(1.4,1)(1.4,.2)
 \psarc(1.8,1){.2}{90}{180}
 \psline(1.6,1)(1.6,.2)
 \pccurve[angleA=270,angleB=270,ncurv=1](1.6,-.2)(3.2,-.2)
 \rput(.5,.3){$m$} \rput(2.5,.3){$n$}
\pccurve[angleA=-90,angleB=-90,ncurv=1](-.8,-.2)(3.8,-.2)
\pccurve[angleA=90,angleB=180,ncurv=1](-.8,.2)(1.5,2.8)
\pccurve[angleA=90,angleB=0,ncurv=1](3.8,.2)(1.5,2.8)
\rput(1.5,3){$p_e$}
\psline(.4,1.2)(.8,1.2)
\psarc(0,1){.2}{90}{180}
\psline(-.2,1)(-.2,.2)
\psline(.4,2.1)(.8,1.8)
\psline(.8,2.1)(.4,1.8)
\psarc(.4,2.25){.15}{90}{270}
\psline(.4,2.4)(2.2,2.4)
\psarc(3,1){.2}{0}{90}
\psline(2.2,1.2)(2.6,1.2)
\psline(3.2,1)(3.2,.2)
\psline(2.2,1.8)(2.6,2.1)
\psline(2.6,1.8)(2.2,2.1)
\psarc(2.6,2.25){.15}{270}{90}
\psline(2.6,2.4)(2.2,2.4)
\pccurve[angleA=-90,angleB=-90,ncurv=1](3.5,-.2)(-.5,-.2)
\pccurve[angleA=90,angleB=0,ncurv=1](3.5,.2)(3,1.5)
\psline(1.2,1.5)(1.8,1.5)
\pccurve[angleA=90,angleB=180,ncurv=1](-.5,.2)(0,1.5)
\psline(2.6,1.5)(2.2,1.5)
\psline(.4,1.5)(.8,1.5)
\rput(1.5,-1.5){$p_i$}
\end{pspicture}
\hspace{-3.5 mm}
+
(\alpha_{n+1}\beta_{m+1}+\alpha_{m+1}\beta_{n+1})\begin{pspicture}[shift=-2](-1,-2)(4.5,2)
\psframe(-1,-.2)(0,.2)
\psframe(1,-.2)(2,.2)
\psframe(3,-.2)(4,.2)
\psframe(.8,1)(1.2,2)
\psframe(1.8,1)(2.2,2)
\psframe(0,1)(.4,2)
\psframe(2.6,1)(3,2)
 \psline(.8,2.1)(2.2,2.1)
 \pccurve[angleA=270,angleB=270,ncurv=1](-.2,-.2)(1.4,-.2)
 \psarc(1.2,1){.2}{0}{90}
 \psline(1.4,1)(1.4,.2)
 \psarc(1.8,1){.2}{90}{180}
 \psline(1.6,1)(1.6,.2)
 \pccurve[angleA=270,angleB=270,ncurv=1](1.6,-.2)(3.2,-.2)
 \rput(.5,.3){$m$} \rput(2.5,.3){$n$}
\pccurve[angleA=-90,angleB=-90,ncurv=1](-.8,-.2)(3.8,-.2)
\pccurve[angleA=90,angleB=180,ncurv=1](-.8,.2)(1.5,2.8)
\pccurve[angleA=90,angleB=0,ncurv=1](3.8,.2)(1.5,2.8)
\rput(1.5,3){$p_e$}
\psline(.4,1.2)(.8,1.2)
\psarc(0,1){.2}{90}{180}
\psline(-.2,1)(-.2,.2)
\psarc(.8,1.95){.15}{90}{270}
\psarc(.4,1.95){.15}{270}{90}
\psarc(.4,2.25){.15}{90}{270}
\psline(.4,2.4)(2.2,2.4)
\psarc(3,1){.2}{0}{90}
\psline(2.2,1.2)(2.6,1.2)
\psline(3.2,1)(3.2,.2)
\psline(2.2,1.8)(2.6,2.1)
\psline(2.2,2.1)(2.6,1.8)
\psarc(2.6,2.25){.15}{270}{90}
\psline(2.6,2.4)(2.2,2.4)
\pccurve[angleA=-90,angleB=-90,ncurv=1](3.5,-.2)(-.5,-.2)
\pccurve[angleA=90,angleB=0,ncurv=1](3.5,.2)(3,1.5)
\psline(1.2,1.5)(1.8,1.5)
\pccurve[angleA=90,angleB=180,ncurv=1](-.5,.2)(0,1.5)
\psline(2.6,1.5)(2.2,1.5)
\psline(.4,1.5)(.8,1.5)
\rput(1.5,-1.5){$p_i$}
\end{pspicture}
\end{flalign*}
\end{proof}

Putting the above formulas together, we can define the recursive evaluation:

\begin{align*} Net(m,n,p)=\left(A_p+\sum_{i=1}^{p-1}A_i \prod_{k=i+1}^p B_k\right)Net(m,n,p-1)
\end{align*}

and by direct calculation, we see that 
\begin{align*}
\prod_{i+1}^p B_k=\left(\frac{[2m+2i+2][m+i][n+i][2n+2i+2]}{[m+i+1][n+i+1]}\right)\left( \frac{[m+p+1][n+p+1]}{[2m+2p+2][2n+2p+2][n+p][m+p]}\right)
\end{align*}

simplifying the expression further to
\begin{align*}
A_p+\left( \frac{[m+p+1][n+p+1]}{[2m+2p+2][2n+2p+2][n+p][m+p]}\right)\left(\sum_{i=1}^{p-1}A_i\left(\frac{[2m+2i+2][m+i][n+i][2n+2i+2]}{[m+i+1][n+i+1]}\right)\right)
\end{align*}

when $q$ is a sufficiently large root of unity $(N=4\cdot(m+n+p+1)$, the left factor is always positive, as is the product, so it sufficient to check that $A_i$ is nonnegative for all $1 \leq i \leq p$. So, using the formulas

\begin{align*}
\alpha_n:=\frac{[2n][n+1][n-1]}{[2n+2][n]^2} && \beta_n:=\frac{[n-1]}{[n][2]},
\end{align*}

and

\begin{align*}
A_i:=-\frac{[6][2]}{[3]}+\alpha_{m+i}+\alpha_{n+i}+\frac{[6][2]}{[3]}(\beta_{n+i}+\beta_{m+i})-[4][2]\beta_{n+i}\cdot \beta_{m+i}&& B_i:=\alpha_{n+i} \cdot \alpha_{m+i}
\end{align*}

And by substitution, we see that

\begin{align*}A_i&=-\frac{[6][2]}{[3]}+\frac{[2(m+i)][m+i+1][m+i-1]}{[2m+2i+2][m+i]^2}+\frac{[2(n+i)][n+i+1][n+i-1]}{[2n+2i+2][n+i]^2}\\
&+\frac{[6][2]}{[3]}\left(\frac{[m+i-1]}{[m+i][2]}+\frac{[n+i-1]}{[n+i][2]} \right)-[4][2]\left(\frac{[m+i-1]}{[m+i][2]}\frac{[n+i-1]}{[n+i][2]}\right).
\end{align*}

\begin{theorem}
When $q$ is a root of unity of order greater than $2(a+b+c)+4$, we have that $\theta(a,b,c)\neq 0$.  
\end{theorem}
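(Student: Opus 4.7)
The plan is to iterate the recursion of Lemma 7 down to the base case and verify that every factor produced is nonzero for $q$ in the allowed range. Setting $m=(a+b-c)/2$, $n=(b+c-a)/2$, $p=(a+c-b)/2$, Lemma 3 gives $\theta(a,b,c)=Net(m,n,p)$; applying Lemma 7 repeatedly, starting from the base case $Net(m,n,0)=\mathrm{Tr}(P_{m+n,0})$ established in Lemmas 3 and 4, yields
\begin{equation*}
\theta(a,b,c) \;=\; \left(\prod_{j=1}^{p} C_j\right)\cdot\mathrm{Tr}(P_{m+n,0}), \qquad C_j \;:=\; A_j + \sum_{i=1}^{j-1} A_i \prod_{k=i+1}^{j} B_k,
\end{equation*}
where $C_j$ is precisely the recursion coefficient exhibited just before the theorem statement. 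It therefore suffices to show that $\mathrm{Tr}(P_{m+n,0})$ and each $C_j$ are nonzero.

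When the order of $q$ exceeds $2(a+b+c)+4$, every quantum integer $[k]$ appearing in the relevant closed expressions has index bounded by the threshold at which $[k]$ could vanish, so writing $[k]=\sin(k\pi/N)/\sin(\pi/N)$ exhibits each such $[k]$ as a positive real number. This immediately gives $\mathrm{Tr}(P_{m+n,0})>0$ via the explicit formula in Lemma 4, and it shows that the two rational prefactors extracted from $\prod_{k=i+1}^{j}B_k$, namely $\frac{[m+j+1][n+j+1]}{[2m+2j+2][2n+2j+2][n+j][m+j]}$ and $\frac{[2m+2i+2][m+i][n+i][2n+2i+2]}{[m+i+1][n+i+1]}$, are strictly positive. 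Consequently, to conclude $C_j>0$ it suffices to verify $A_i > 0$ for $1\le i\le p$.

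The main obstacle is establishing this positivity of the $A_i$. Since $A_i$ is defined as an alternating sum of five terms built from $\alpha_{m+i},\alpha_{n+i},\beta_{m+i},\beta_{n+i}$ together with the constants $\pm [6][2]/[3]$ and $[4][2]$, its sign is not manifest term-by-term. The strategy is to place the five summands over the common denominator $[3][2]^2[m+i]^2[n+i]^2[2m+2i+2][2n+2i+2]$, then use the quantum identity $[k+1][k-1]=[k]^2-1$ and the doubling identity $[2k]=[k]([k+1]+[k-1])/[1]$ (equivalently, after substituting $[k]=\sin(k\pi/N)/\sin(\pi/N)$, repeated application of the product-to-sum formula $2\sin x\sin y=\cos(x-y)-\cos(x+y)$) to collapse the numerator into a manifestly positive expression for $N>2(a+b+c)+4$. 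This trigonometric bookkeeping is the technical heart of the argument; granting it, each $C_j$ is a positive linear combination of positive quantities and hence strictly positive, so the full product $\prod_{j=1}^{p}C_j\cdot\mathrm{Tr}(P_{m+n,0})$ is strictly positive, yielding $\theta(a,b,c)\neq 0$ as claimed.
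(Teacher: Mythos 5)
Your overall architecture is the same as the paper's: iterate the recursion $Net(m,n,p)=C_p\,Net(m,n,p-1)$ down to $Net(m,n,0)=\mathrm{Tr}(P_{m+n,0})$, observe that the trace and the rational prefactors extracted from $\prod B_k$ are built from quantum integers that stay positive below the root-of-unity threshold, and thereby reduce the theorem to showing that the $A_i$ do not cause cancellation in $C_j$. The problem is that you stop exactly there: the positivity (really, the consistent nonvanishing sign) of the $A_i$ is the entire technical content of the theorem, and you defer it with ``granting it.'' The paper's proof of Theorem~2 consists almost entirely of this step --- substituting $[s]=\sin(2\pi s/N)/\sin(2\pi/N)$, expanding $A_i$ over a common denominator into an explicit sum of products of sines, and arguing from the constraint $N>4(m+n+i+1)$ that all the sine arguments lie in a range where the expression has a fixed sign. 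A proof that reduces the claim to ``$A_i$ has constant sign'' without establishing that fact has a genuine gap, not a routine omission.

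Two further cautions about the step you sketch. First, the identity $[2k]=[k]([k+1]+[k-1])/[1]$ is false: since $[k+1]+[k-1]=[2][k]$, the right-hand side is $[2][k]^2$, whereas $[2k]=[k](q^k+q^{-k})$; so the proposed algebraic collapse does not go through as written. Second, your claim that the numerator becomes ``manifestly positive'' is suspect: the paper's own computation concludes the cleared-denominator expression is strictly \emph{negative}, and a classical-limit check ($q\to 1$, $m+i,n+i\to\infty$ gives $\alpha\to 1$, $\beta\to 1/2$, hence $A_i\to -4+2+4-2=0$) shows the five terms nearly cancel, so the sign is delicate rather than manifest. The logic of the recursion only requires that all $A_i$ share one sign (so that $C_j$ is a sum of like-signed terms), and you should phrase the target that way; but whichever sign it is, the verification has to be carried out, and it is the part of the argument you have not supplied.
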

\begin{proof}
Making the substitution $q=e^{2\pi i/2(2k+6)}$, we let $N:=2(2k+6)$ and replace each quantum integer with $$[s]=\frac{\sin(2\pi s/N)}{\sin(2 \pi/N)}$$

and collecting terms in the denominator, and using the fact that the denominator is non-vanishing and positive, we see that it is sufficient to check

\begin{align*}A_i&=\sin(4s \cdot \pi/N)\sin((s+1)\cdot 2 \pi/N)\sin((s-1)\cdot 2 \pi/N)\sin((2j+2)\cdot 2 \pi/N)\sin^2(j\cdot 2 \pi/N)\\
&\cdot\sin(6 \pi/N) \sin(4 \pi/N) \sin(2 \pi/N)
+\sin(4j\cdot  \pi/N)\sin((j+1)\cdot 2 \pi/N)\sin((j-1)\cdot 2 \pi/N)\\
&\cdot \sin((2s+2)\cdot 2 \pi/N)\sin^2(s\cdot 2 \pi/N)\sin(6 \pi/N) \sin(4 \pi/N) \sin(2 \pi/N)+\sin((s-1)\cdot 2 \pi/N)\\
&\cdot\sin((2s+2)\cdot 2 \pi/N)\sin(k\cdot 2 \pi/N)\sin((2j+2)\cdot 2 \pi/N)\sin^2(j\cdot 2 \pi/N)\cdot\sin(12 \pi/N) \sin(4 \pi/N)\\ 
&\cdot\sin(2 \pi/N)+\sin((j-1)\cdot 2 \pi/N)\sin((2j+2)\cdot 2 \pi/N)\sin(j\cdot 2 \pi/N)\sin((2s+2)\cdot 2 \pi/N)\sin^2(s)\\
&\cdot\sin(12 \pi/N) \sin(4 \pi/N) \sin(2 \pi/N)-\sin((s-1)\cdot 2 \pi/N)\sin((j-1)\cdot 2 \pi/N)\sin((2s+2)\cdot 2 \pi/N)\\
&\cdot\sin((2j+2)\cdot 2 \pi/N)\sin(s\cdot 2 \pi/N)\sin(j\cdot 2 \pi/N)\sin(6 \pi/N)\sin(2 \pi/N)\sin(8\pi/N)\\
&-\sin(12 \pi/N)\sin^2(4 \cdot pi/N)\sin((2s+2)\cdot 2 \pi/N)\sin((2j+2)\cdot 2 \pi/N)\sin^2(j\cdot 2 \pi/N)\sin^2(s\cdot 2 \pi/N)
\end{align*}

where $s:=n+i$ while $j:=m+i$.

We claim that this is is nonzero for $N>4(m+n+i+1)$. This computation seems unwieldy, but is actually in a form that allows us to conclude our result.  To see this, one should note that the restriction that $N>4(m+n+i+1)$ gives that for every value of $x$ as appears above, $\sin(x) \in (0,\pi/2)$. This implies that each among the $\sin$ are monotonic. Then we see that the function is strictly negative, and thus along the discussion above we have that $\theta(a,b,c)$ is strictly nonzero.
\end{proof}
\begin{corollary}
In $\mathrm{Sp}(4)_k$ we have that $\mathrm{Hom}(a\otimes b\otimes c,\mathbb{C})$ is $1$ dimensional when $a+b+c$ is even, $a+b\geq c, a+c\geq b,$ and $b+c\geq a$ and if $a+b+c<2k+4$.
\end{corollary}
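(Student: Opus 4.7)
The plan is to combine the three main ingredients already assembled in the paper: the dimension bound from Lemma~2, the existence of an explicit admissible web from Lemma~3, and the nonvanishing of the theta net from Theorem~2.

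First, Lemma~2 and the Kuperberg equivalence identify $\mathrm{Hom}(V_a\otimes V_b\otimes V_c,\mathbb{C})\cong I((a,0),(b,0),(c,0))$ as a space of dimension at most $1$ before semisimplification. So I only need to exhibit a nonzero, non-negligible element to conclude that the dimension equals $1$ in $\mathrm{Sp}(4)_k$.

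Second, under the hypotheses that $a+b+c$ is even and the three triangle inequalities hold, the numbers
\[
m=\tfrac{a+b-c}{2},\qquad n=\tfrac{b+c-a}{2},\qquad p=\tfrac{a+c-b}{2}
\]
are nonnegative integers. The diagram of Lemma~3, with $m$ strands joining clasps $\mathbf{a}$ and $\mathbf{b}$, $n$ joining $\mathbf{b}$ and $\mathbf{c}$, and $p$ joining $\mathbf{a}$ and $\mathbf{c}$, therefore makes sense and gives an explicit basis vector $v$ of the triple-clasped space. This vector has no internal tetravalent vertex and no cut path of lower weight than the clasps permit, so it is genuinely nonzero, giving dimension exactly $1$ in the non-semisimplified category.

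Third, to show $v$ survives the semisimplification defining $\mathrm{Sp}(4)_k$, I need $\theta(a,b,c,v,v)\neq 0$. Here is the only real calculation in the argument: matching the hypothesis $a+b+c<2k+4$ against the hypothesis of Theorem~2. The root of unity for $\mathrm{Sp}(4)_k$ has order $N=2(2k+6)=4k+12$, and the inequality $a+b+c<2k+4$ is equivalent to $4k+12>2(a+b+c)+4$, i.e.\ to $N>2(a+b+c)+4$, which is precisely what Theorem~2 requires to conclude $\theta(a,b,c)\neq 0$. Thus $v$ is not a negligible morphism and descends to a nonzero element of $\mathrm{Hom}(a\otimes b\otimes c,\mathbb{C})$ in $\mathrm{Sp}(4)_k$.

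There is no serious obstacle here beyond verifying the numerical matching of bounds in the last step; the content of the corollary is really that the bound on $a+b+c$ in $\mathrm{Sp}(4)_k$ coincides exactly with the bound guaranteeing nonvanishing of the theta net in Theorem~2.
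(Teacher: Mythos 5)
Your proposal is correct and follows essentially the same route the paper intends: Lemma~2 bounds the dimension by $1$, Lemma~3 supplies the explicit web for admissible $(a,b,c)$, and Theorem~2 gives the nonvanishing of $\theta(a,b,c)$, with the numerical check $a+b+c<2k+4 \iff 2(2k+6)>2(a+b+c)+4$ carried out exactly as the paper's setup requires. No gaps.
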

This provides a appropriate notion for admissibility for the $\mathrm{Sp}(4)_k$ link invariant using the $4$ dimensional representation.  We do note that this bound may not be sharp, meaning we don't have exactly when a triple is admissible, but only a sufficient condition.

\end{document}